\DeclareSymbolFont{cyrletters}{OT2}{wncyr}{m}{n}
\DeclareMathSymbol{\Sha}{\mathalpha}{cyrletters}{"58}
 \newtheorem{thm}{Theorem}[section]
 \newtheorem*{thm*}{Theorem}
 \newtheorem{prop}[thm]{Proposition}
 \newtheorem{dfn}[thm]{Definition}
 \theoremstyle{definition}
 \theoremstyle{remark}
 \numberwithin{equation}{section}
\newcommand{\sm}{\left(\begin{smallmatrix}}
\newcommand{\esm}{\end{smallmatrix}\right)}
\newcommand{\bpm}{\begin{pmatrix}}
\newcommand{\ebpm}{\end{pmatrix}}
\newcommand{\mat}{\left(\begin{matrix}}
\newcommand{\emat}{\end{matrix}\right)}
\def\ZZ{\mathbb{Z}}
\def\GL{\mathrm{GL}}
\def\SL{\mathrm{SL}}
\begin{document}

\title{Rankin-Cohen bracket for vector-valued modular forms}

\author{Youngmin Lee}
\address{School of Mathematics, Korea Institute for Advanced Study, 85 Hoegiro, Dongdaemun-gu,
Seoul 02455, Republic of Korea}
\email{youngminlee@kias.re.kr}
 \author{Subong Lim}
   \address{Department of Mathematics Education, Sungkyunkwan University, Jongno-gu, Seoul 110-745, Republic of Korea}
   \email{subong@skku.edu}
   \author{Wissam Raji}
   \address{Department of Mathematics, American University of Beirut (AUB) and the Number Theory Research Unit at the Center for Advanced Mathematical Sciences (CAMS) at AUB, Beirut, Lebanon}
   \email{wr07@aub.edu.lb}


\subjclass[2020]{11F12, 11F50}

\thanks{Keywords: Rankin-Cohen bracket, vector-valued modular form, Jacobi form}

\begin{abstract}
In this paper, we explore the relationship between Rankin-Cohen brackets for vector-valued modular forms and Petersson’s inner products, deriving an explicit description of the adjoint map for this bracket operator. The study extends to the cases of Jacobi forms and skew-holomorphic Jacobi forms, establishing connections between their respective Rankin-Cohen brackets and those defined for vector-valued modular forms through an isomorphism. Adjoint maps for these extended bracket operators are also examined.
\end{abstract}
\maketitle
\section{Introduction} Vector-valued modular forms are particularly significant in number theory and representation theory. These forms are important generalizations of elliptic modular forms that arise naturally in the theory of Jacobi forms, Siegel modular forms, and Moonshine.  Vector-valued modular forms have been used as an important tool in tackling classical problems in the theory of modular forms.  For example, Selberg used these forms to give estimates for the Fourier coefficients of the classical modular forms \cite{S}. Borcherds in \cite{B1} and \cite{B2} used vector-valued modular forms associated with Weil representations to describe the Fourier expansion of various theta liftings.  Applications of vector-valued modular forms are found in several areas. For instance, they play a crucial role in studying certain differential equations and are connected to vertex operator algebras and conformal field theory in physics. Other applications concerning vector-valued modular forms of half-integral weight seem to provide a simple solution to the Riemann-Hilbert problem for representations of the modular group \cite{BG}.

\par On the other hand, Rankin-Cohen brackets hold significant importance in the theory of classical modular forms, providing a powerful method for constructing new modular forms and revealing deep structural and arithmetic properties. Beyond construction, these brackets endow the graded ring of modular forms with a rich algebraic structure, often referred to as a Rankin-Cohen algebra, and are instrumental in understanding differential equations satisfied by modular forms, such as Ramanujan's differential equations for Eisenstein series. They also play a crucial role in number theory, notably through connections established by Zagier to the periods of modular forms and the special values of L-functions. They have been used to prove various arithmetic identities involving Fourier coefficients and divisor sums. Generalizations of Rankin-Cohen brackets have been explored for Calabi-Yau quasi-modular forms, indicating potential connections to geometry \cite{N}. In this paper, we generalize Rankin-Cohen brackets for the case of vector-valued modular forms and explore their relationship with Jacobi forms, demonstrating an isomorphism between spaces of Jacobi forms and certain spaces of vector-valued modular forms. This connection allows for the transfer of structures between them. Rankin-Cohen brackets are differential operators that construct new modular forms from existing ones. In the context of vector-valued modular forms, these brackets are particularly important because they provide a way to generate new vector-valued modular forms from existing ones, often with higher weights. The paper defines the $\nu$-th Rankin-Cohen bracket $[f_1, f_2]_\nu$ for two vector-valued modular forms $f_1$ and $f_2$ and shows that if $f_1 \in M_{k_1, \chi_1, \rho_1}$ and $f_2 \in M_{k_2, \chi_2, \rho_2}$, then their bracket $[f_1, f_2]_\nu$ is a vector-valued cusp form in $S_{k_1+k_2+2\nu, \chi_1\chi_2, \rho_1 \otimes \rho_2}$. 

In this paper, we focus on the Petersson pairing between a cusp form and the Rankin–Cohen bracket of a modular form with either an Eisenstein series or a Poincaré series.
It was initiated by Zagier \cite{ZA} and has been studied in various directions. 
For instance, Choie, Kohnen, and Zhang \cite{CKZ} extended Zagier’s results from Eisenstein series to Poincaré series and considered more general multiplier systems.
From the perspective of vector-valued modular forms, Ni and Xue \cite{NX} obtained results for the case of integral weights and Eisenstein series.
In Theorem \ref{thm 3}, we provide the explicit formula of $\langle f,[g,\mathbb{P}]_{\nu} \rangle$ in terms of the Fourier coefficients of $f$ and $g$, where $f$ is a vector-valued cusp form, $g$ is a vector-valued modular form, and $\mathbb{P}$ denotes a Poincaré series.
This result generalizes previous results to vector-valued modular forms with real weights and general multiplier systems involving Poincaré series.

The remainder of this paper is organized as follows. 
In Section \ref{s : Pre}, we review the notions and properties of vector-valued modular forms including the Rankin-Cohen bracket and a Poincaré series.
In Section \ref{s : Thm}, we prove our main theorem on the formula for the Petersson pairing between a vector-valued cusp form and the Rankin-Cohen bracket of a vector-valued modular form with a Poincaré series. 
Moreover, we introduce a result on the adjoint map of the Rankin-Cohen bracket with respect to the Petersson inner product. 
In Sections \ref{s : Jacobi forms} and \ref{s : skew Jacobi}, we recall the concept of Rankin-Cohen brackets involving Jacobi forms and skew-holomorphic Jacobi forms each paired with modular forms, and demonstrate how these constructions relate to the vector-valued setting via an isomorphism. 
Furthermore, building on the results from Section \ref{s : Thm}, we present corresponding results for Jacobi forms and skew-holomorphic Jacobi forms in Sections \ref{s : Jacobi forms} and \ref{s : skew Jacobi}, respectively.

\section{Preliminaries}\label{s : Pre}
Let $d$ be a positive integer and $\rho : \mathrm{SL}_{2}(\mathbb{Z})\to \GL_{d}(\mathbb{C})$ be a representation.
Let $\alpha(\rho)$ be a non-negative real number depending on $\rho$ (for details, see \cite{KM2}).
Note that if $\rho$ is a unitary representation, then $\alpha(\rho)=0$.
Let $\chi$ be a unitary multiplier system of weight $k$ on $\mathrm{SL}_{2}(\mathbb{Z})$. 
In this paper, we assume that $\rho\left(T\right)$ is a diagonal unitary matrix, where $T:=\sm 1 & 1 \\
0 & 1 \esm$.
Let $\{\mathbf{e}_{1},\dots,\mathbf{e}_{d}\}$ be the standard basis of $\mathbb{C}^{d}$.
Let $\mathbb{H}$ be the upper half of the complex plane. 
For a vector-valued modular form $f=\sum_{j=1}^{d} f_{j} \mathbf{e}_{j} : \mathbb{H} \to \mathbb{C}^{d}$ and $\gamma\in \mathrm{SL}_{2}(\mathbb{Z})$, the slash operator $|_{k,\chi,\rho}\gamma$ is defined by 
\[ \left(f|_{k,\chi,\rho} \gamma\right)(\tau):=\chi^{-1}(\gamma)(c\tau+d)^{-k}\rho^{-1}(\gamma)f(\gamma \tau), \]
where $\gamma=\sm a & b\\
c & d\esm\in \mathrm{SL}_{2}(\mathbb{Z})$.

\begin{dfn}
    A holomorphic function $f:=\sum_{j=1}^{d} f_{j}\mathbf{e}_{j} : \mathbb{H}\to \mathbb{C}^{d}$ is called a vector-valued modular form of weight $k$ and multiplier system $\chi$ with respect to $\rho$ on $\mathrm{SL}_{2}(\mathbb{Z})$ if
    \begin{enumerate}
        \item $f|_{k,\chi,\rho} \gamma = f$ for all $\gamma\in \mathrm{SL}_{2}(\mathbb{Z})$, and
        \item for each $j\in \{1,\dots, d\}$, $f_{j}$ has the Fourier expansion of the form 
        \[ f_{j}(\tau)=\sum_{n=0}^{\infty} a_{j}(n+\kappa_j) e^{2\pi i (n+\kappa_j)\tau}. \]
        Here, $\kappa_{j}$ denote a real number in $[0,1)$ such that the $(j,j)$-th entry of  $\chi\rho\left(T\right)$ is $e^{2\pi i \kappa_{j}}$.
    \end{enumerate}
    If a vector-valued modular form $f$ satisfies $a_{j}(0)=0$ for all $j\in \{1,\dots,d\}$, then we call that $f$ is a vector-valued cusp form. 
    Let $M_{k,\chi,\rho}$ (resp. $S_{k,\chi,\rho}$) be the space of vector-valued modular forms (resp. cusp forms) of weight $k$ and multiplier system $\chi$ with respect to $\rho$ on $\mathrm{SL}_{2}(\mathbb{Z})$. 
\end{dfn}

  For a holomorphic function $f:=\sum_{j=1}^{d}f_j \mathbf{e}_{j}:\mathbb{H}\to \mathbb{C}^{d}$ and for an integer $s$, we define
    \[ f^{(s)}(\tau):=\sum_{j=1}^{d} D^s_{\tau} (f_j) \mathbf{e}_{j}, \]
    where $D_{\tau}:=\frac{1}{2\pi i}\cdot\frac{d}{d\tau}$.

\begin{dfn}\label{def 1}\cite[Definition 2.8]{NX}
    For each $i\in \{1,2\}$, let $d_i$ be a positive integer.
    Assume that $\rho_i : \mathrm{SL}_{2}(\mathbb{Z})\to \GL_{d_i}(\mathbb{C})$ is a representation such that $\rho_i(T)$ is a diagonal unitary matrix. 
    Let $k_i$ be a real number and $\chi_i$ be a multiplier system of weight $k_i$ on $\mathrm{SL}_{2}(\mathbb{Z})$.
    For two vector-valued modular forms $f_1\in M_{k_1,\chi_1,\rho_1}$, $f_2\in M_{k_2,\chi_2,\rho_2}$ and for a positive integer $\nu$, we define $\nu$-th Rankin-Cohen bracket $[f_1,f_2]_{\nu}$ of $f_1$ and $f_2$ by
    \begin{equation*}
        [f_1,f_2]_{\nu}:=\sum_{i=0}^{\nu}(-1)^{\nu-i}\binom{\nu}{i}\frac{\Gamma(k_1+\nu)\Gamma(k_2+\nu)}{\Gamma(k_1+i)\Gamma(k_2+\nu-i)}f_1^{(i)}(\tau) \otimes f_2^{(\nu-i)}(\tau).
    \end{equation*}
\end{dfn}

\begin{thm}\label{thm 1}\cite[Theorem 1.3]{Z}
    With the notation in Definition \ref{def 1}, we have 
    \[ [f_1,f_2]_{\nu}\in S_{k_1+k_2+2\nu,\chi_1\chi_2, \rho_1\otimes \rho_2}. \]
\end{thm}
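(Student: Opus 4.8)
The statement asserts three things about $[f_1,f_2]_\nu$: that it is holomorphic, that it satisfies the weight $k_1+k_2+2\nu$ transformation law for the multiplier $\chi_1\chi_2$ and the representation $\rho_1\otimes\rho_2$, and that it is in fact cuspidal. Holomorphy is immediate, since $[f_1,f_2]_\nu$ is a finite $\mathbb{C}$-linear combination of tensor products of derivatives of the holomorphic functions $f_1,f_2$. The conceptual point underlying the transformation law is that in the automorphy relation $f_i(\gamma\tau)=\chi_i(\gamma)(c\tau+d)^{k_i}\rho_i(\gamma)f_i(\tau)$ the matrix $\rho_i(\gamma)$ is constant in $\tau$ and therefore commutes with $D_\tau$; hence the bookkeeping of the automorphy factors $(c\tau+d)^{k_i}$ is formally identical to the classical scalar Rankin--Cohen computation, while the representation parts merely multiply to $\rho_1(\gamma)\otimes\rho_2(\gamma)=(\rho_1\otimes\rho_2)(\gamma)$. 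I would make this precise with the Cohen--Kuznetsov generating series rather than a direct induction.

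Concretely, for $f\in M_{k,\chi,\rho}$ set $\Psi_f(\tau,X):=\sum_{m\ge 0}\frac{f^{(m)}(\tau)}{m!\,\Gamma(k+m)}X^m$. Differentiating the automorphy relation and using the chain-rule identity $D_\tau[F(\gamma\tau)]=(c\tau+d)^{-2}(D_\tau F)(\gamma\tau)$ together with $D_\tau(c\tau+d)=\tfrac{c}{2\pi i}$, one checks the transformation law
\[ \Psi_f(\gamma\tau,X)=\chi(\gamma)\rho(\gamma)(c\tau+d)^{k}\exp\!\Big(\tfrac{c(c\tau+d)}{2\pi i}X\Big)\,\Psi_f\big(\tau,(c\tau+d)^2X\big). \]
The decisive move is to pair the two series with opposite signs in $X$: forming $\Psi_{f_1}(\tau,X)\otimes\Psi_{f_2}(\tau,-X)$, the two exponential correction factors are mutually inverse and cancel, leaving
\[ \Psi_{f_1}(\gamma\tau,X)\otimes\Psi_{f_2}(\gamma\tau,-X)=\chi_1\chi_2(\gamma)(\rho_1\otimes\rho_2)(\gamma)(c\tau+d)^{k_1+k_2}\,\Psi_{f_1}\big(\tau,(c\tau+d)^2X\big)\otimes\Psi_{f_2}\big(\tau,-(c\tau+d)^2X\big). \]
Comparing coefficients of $X^\nu$ on both sides, the rescaling $X\mapsto(c\tau+d)^2X$ produces exactly the extra factor $(c\tau+d)^{2\nu}$, so the coefficient $c_\nu(\tau)$ of $X^\nu$ obeys $c_\nu(\gamma\tau)=\chi_1\chi_2(\gamma)(\rho_1\otimes\rho_2)(\gamma)(c\tau+d)^{k_1+k_2+2\nu}c_\nu(\tau)$. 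Finally I would observe that $c_\nu=\sum_{i=0}^\nu\frac{(-1)^{\nu-i}}{i!\,(\nu-i)!\,\Gamma(k_1+i)\Gamma(k_2+\nu-i)}f_1^{(i)}\otimes f_2^{(\nu-i)}$ equals $\big(\nu!\,\Gamma(k_1+\nu)\Gamma(k_2+\nu)\big)^{-1}[f_1,f_2]_\nu$ by Definition \ref{def 1}, so $[f_1,f_2]_\nu$ is a nonzero scalar multiple of $c_\nu$ and satisfies the same transformation law; this is precisely $[f_1,f_2]_\nu|_{k_1+k_2+2\nu,\chi_1\chi_2,\rho_1\otimes\rho_2}\gamma=[f_1,f_2]_\nu$.

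For cuspidality I would argue on Fourier expansions, using $\nu\ge 1$. The relevant exponent for the $(p,q)$-component of the $\rho_1\otimes\rho_2$ form is $\kappa_{1,p}+\kappa_{2,q}\bmod 1$, since $e^{2\pi i\kappa_{1,p}}e^{2\pi i\kappa_{2,q}}$ is the corresponding diagonal entry of $\chi_1\chi_2(\rho_1\otimes\rho_2)(T)$. A constant ($q^0$) term in this component could only arise from an $n=0$ contribution in both tensor factors, which forces $\kappa_{1,p}=\kappa_{2,q}=0$; but then $D^i$ with $i\ge 1$ annihilates the constant term of the $p$-th component of $f_1$, and $D^{\nu-i}$ with $\nu-i\ge 1$ annihilates that of the $q$-th component of $f_2$, so a nonzero constant contribution would require $i=0$ and $\nu-i=0$ simultaneously, impossible for $\nu\ge 1$. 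Hence every component has vanishing constant term and $[f_1,f_2]_\nu\in S_{k_1+k_2+2\nu,\chi_1\chi_2,\rho_1\otimes\rho_2}$. The main obstacle in this plan is establishing the generating-series transformation law with its exponential correction factor: this is where the special Rankin--Cohen coefficients are really forced, and verifying it amounts to the Chu--Vandermonde-type identity that makes all non-modular ($c$-proportional) terms cancel. Everything else is bilinear bookkeeping that is insensitive to the passage from scalars to the tensor representation.
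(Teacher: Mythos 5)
Your proof is correct, but there is nothing in the paper to compare it against: the paper gives no proof of this theorem and simply imports it from Zemel \cite[Theorem 1.3]{Z}. What you supply is a self-contained derivation by the Cohen--Kuznetsov generating-series method, and the point you isolate is exactly the right one: $\chi_i(\gamma)$ and $\rho_i(\gamma)$ are independent of $\tau$, hence commute with $D_\tau$ and combine to $\chi_1\chi_2(\gamma)(\rho_1\otimes\rho_2)(\gamma)$, so the automorphy-factor bookkeeping is literally the scalar computation. The stated transformation law for $\Psi_f$ is the correct one, the two exponential factors do cancel under the substitution $X\mapsto -X$ in the second slot, and extracting the $X^{\nu}$-coefficient gives $c_\nu(\gamma\tau)=\chi_1\chi_2(\gamma)(\rho_1\otimes\rho_2)(\gamma)(c\tau+d)^{k_1+k_2+2\nu}c_\nu(\tau)$ with $[f_1,f_2]_\nu=\nu!\,\Gamma(k_1+\nu)\Gamma(k_2+\nu)\,c_\nu$, as you say. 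The cuspidality argument is also correct and correctly locates where $\nu\ge 1$ enters: a zero exponent in the $(p,q)$-component forces $n_1=n_2=0$ and $\kappa_{1,p}=\kappa_{2,q}=0$, after which every summand carries a factor $0^{i}\cdot 0^{\nu-i}$ that vanishes for all $0\le i\le\nu$. Two caveats. First, the identity $\Psi_f(\gamma\tau,X)=\chi(\gamma)\rho(\gamma)(c\tau+d)^{k}\exp\!\left(\frac{c(c\tau+d)X}{2\pi i}\right)\Psi_f\bigl(\tau,(c\tau+d)^2X\bigr)$ is asserted rather than verified; you rightly flag it as the crux, but as written the proof leans on it, so either carry out the induction on $D^m f(\gamma\tau)$ (the identity \cite[(76)]{ZA} used in the proof of Theorem \ref{thm 3} is precisely its coefficient-by-coefficient form) or cite the scalar case explicitly. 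Second, the normalization $1/\Gamma(k+m)$ degenerates when $k+m$ is a nonpositive integer, so for weights $k_i\le -\nu$ the scalar $\nu!\,\Gamma(k_1+\nu)\Gamma(k_2+\nu)$ is not finite and the argument should be restated as the corresponding polynomial identity in $k_1,k_2$ (the coefficients $\Gamma(k+\nu)/\Gamma(k+i)$ are polynomials in $k$); this edge case does not affect the weights used elsewhere in the paper.
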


We review the definition of Poincar\'e series for vector-valued modular forms in \cite[Section 3]{KM}.
\begin{dfn}\label{def 2}
Let $d$ be a positive integer and $\rho : \mathrm{SL}_{2}(\mathbb{Z})\to \mathrm{GL}_{d}(\mathbb{C})$ be a representation.
    Let $k$ be a real number with $k>2+2\alpha(\rho)$ and $\chi$ be a multiplier system of weight $k$ on $\mathrm{SL}_{2}(\mathbb{Z})$.
    Let $s$ be a non-negative integer and $u$ be a positive integer with $1\leq u\leq d$.
    A vector-valued Poincar\'{e} series $\mathbb{P}(\tau;\rho,k,\chi,s,u)$ is defined by 
    \[ \mathbb{P}(\tau;\rho,k,\chi,s,u):=\frac{1}{2} \sum_{M\in \mathrm{SL}_{2}(\mathbb{Z})/\langle T \rangle } \chi^{-1}(M)(c\tau+d)^{-k}\mathrm{exp}(2\pi i(s+\kappa_u)M\tau)\rho^{-1}(M)\mathbf{e}_{u}. \]
    Here, $\exp(z):=e^{z}$.
\end{dfn}
Note that $\mathbb{P}(\tau;\rho,k,\chi,s,u)\in M_{k,\chi,\rho}$ if $s$ is a non-negative integer and $k>2+2\alpha(\rho)$.
Now, we recall the definition of the Petersson pairing for vector-valued modular forms in \cite[Section 5]{KM}.
For a representation $\rho:\mathrm{SL}_{2}(\mathbb{Z})\to \GL_d(\mathbb{C})$, let $\rho^{\vee} : \mathrm{SL}_{2}(\mathbb{Z})\to \GL_d(\mathbb{C})$ be the dual representation of $\rho$ defined by  
\[ \rho^{\vee}(g):=\overline{\rho(g^{-1})^{T}}. \]

\begin{dfn}
   Assume that $f=\sum_{j=1}^{d} f_{j} \mathbf{e_j}\in M_{k,\chi,\rho}$ and $g=\sum_{j=1}^{d} g_{j}\mathbf{e_j}\in S_{k,\chi,\rho^{\vee}}$. 
   The Petersson pairing $\langle f , g \rangle$ of $f$ and $g$ is defined by
   \[ \langle f, g \rangle := \int_{\mathrm{SL}_{2}(\mathbb{Z})\backslash \mathbb{H}} \sum_{j=1}^{d} f_{j}(\tau)\overline{g_j(\tau)} y^{k}\frac{dxdy}{y^2},  \]
   where $\tau=x+iy$.
\end{dfn}

\begin{thm}\cite[Theorem 5.3]{KM}\label{thm 2}
    Let $\rho,k,\chi,s,u$ be as in Definition \ref{def 2}.
    Let $g(\tau)=\sum_{j=1}^{d}\sum_{n=0}^{\infty}b_j(n+\kappa_j)e^{2\pi i (n+\kappa_j)\tau} \mathbf{e}_{j}\in S_{k,\chi,\rho^{\vee}}$.
    Then, 
    \[ \langle \mathbb{P}(\tau;\rho,k,\chi,s,u) , g \rangle = \begin{cases}
        \overline{b_{u}(s+\kappa_r)}\frac{\Gamma(k-1)}{(4\pi(s+\kappa_u))^{k-1}} \quad &\text{if }s+\kappa_u>0,\\
        0 &\text{if }s+\kappa_u=0.
    \end{cases}  \]
\end{thm}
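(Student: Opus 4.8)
The natural approach is the Rankin--Selberg unfolding method. The plan is to recognize the Poincaré series as an average of a single ``seed'' over cosets and then to unfold the integral defining the pairing onto a vertical strip, where the computation reduces to elementary Fourier analysis and a single Gamma integral.

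First I would record that, setting $\phi(\tau):=\exp(2\pi i(s+\kappa_u)\tau)\mathbf{e}_u$, the defining sum reads $\mathbb{P}=\tfrac12\sum_M \phi|_{k,\chi,\rho}M$, where the summand depends only on the coset of $M$ because $\phi|_{k,\chi,\rho}T=\phi$ (this uses that $s$ is an integer together with the fact that the $(u,u)$-entry of $\chi\rho(T)$ equals $e^{2\pi i\kappa_u}$). Substituting this into the Petersson pairing and interchanging the sum with the integral over $\mathrm{SL}_2(\mathbb{Z})\backslash\mathbb{H}$ — legitimate since the hypothesis $k>2+2\alpha(\rho)$ guarantees absolute convergence of the Poincaré series — gives $\langle\mathbb{P},g\rangle=\tfrac12\sum_M\int_{\mathrm{SL}_2(\mathbb{Z})\backslash\mathbb{H}}\langle(\phi|_{k,\chi,\rho}M)(\tau),g(\tau)\rangle\,y^k\,d\mu$, where $\langle v,w\rangle=\sum_j v_j\overline{w_j}$ and $d\mu=y^{-2}dx\,dy$.

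The technical heart is the transformation identity $\langle(\phi|_{k,\chi,\rho}M)(\tau),g(\tau)\rangle\,y^k=\langle\phi(M\tau),g(M\tau)\rangle\,\mathrm{Im}(M\tau)^k$. To prove it I would substitute the modularity relation $g(\tau)=\chi^{-1}(M)(c\tau+d)^{-k}(\rho^{\vee})^{-1}(M)g(M\tau)$ for $g\in S_{k,\chi,\rho^{\vee}}$ into the left-hand side. The scalar factors combine to $|c\tau+d|^{-2k}$ since $\chi$ is unitary, while the representation matrices collapse via the duality $\rho^{\vee}(g)=\overline{\rho(g^{-1})^{T}}$: since $(\rho^{\vee})^{-1}(M)=\overline{\rho(M)^{T}}$, its conjugate transpose is exactly $\rho(M)$, so in $\langle\rho^{-1}(M)\phi(M\tau),(\rho^{\vee})^{-1}(M)g(M\tau)\rangle$ the matrix factors cancel as $\rho(M)\rho^{-1}(M)=I$, leaving $\langle\phi(M\tau),g(M\tau)\rangle$. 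Combining with $\mathrm{Im}(M\tau)^k=y^k|c\tau+d|^{-2k}$ yields the identity. This cancellation is precisely where the choice of the dual representation $\rho^{\vee}$ in the pairing is essential, and I expect the conjugation-and-transpose bookkeeping in this step to be the main obstacle specific to the vector-valued setting.

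With the identity in hand, the pairing becomes $\tfrac12\sum_M\int_{\mathrm{SL}_2(\mathbb{Z})\backslash\mathbb{H}}F(M\tau)\,d\mu$ with $F(\tau)=\langle\phi(\tau),g(\tau)\rangle\,y^k=e^{2\pi i(s+\kappa_u)\tau}\overline{g_u(\tau)}\,y^k$; since $F\circ M$ depends only on the image of $M$ in $\mathrm{PSL}_2(\mathbb{Z})$, the factor $\tfrac12$ absorbs the $\pm I$ redundancy and the standard unfolding collapses the sum over cosets together with the fundamental domain into a single integral over the strip $\langle T\rangle\backslash\mathbb{H}=\{0\le x<1,\ y>0\}$. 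Finally I would insert the Fourier expansion of $g_u$ and compute term by term: the inner integral $\int_0^1 e^{2\pi i(s-n)x}\,dx=\delta_{s,n}$ isolates the $n=s$ contribution, and the remaining integral $\int_0^\infty e^{-4\pi(s+\kappa_u)y}y^{k-2}\,dy=\Gamma(k-1)/(4\pi(s+\kappa_u))^{k-1}$, valid when $s+\kappa_u>0$, produces the stated formula. When $s+\kappa_u=0$ the only surviving coefficient is $b_u(0)$, which vanishes because $g$ is a cusp form, giving $0$.
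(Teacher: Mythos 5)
Your unfolding argument is correct, and it is essentially the standard proof: the paper itself gives no proof of this statement (it is quoted from Knopp--Mason \cite[Theorem~5.3]{KM}), and the argument there is exactly this Rankin--Selberg unfolding, with the key point being the cancellation $\bigl((\rho^{\vee})^{-1}(M)\bigr)^{*}\rho^{-1}(M)=\rho(M)\rho^{-1}(M)=I$ that you identify. One cosmetic remark: the $\kappa_r$ appearing in the displayed formula of the statement is a typo for $\kappa_u$, and your computation confirms that the surviving coefficient is $\overline{b_u(s+\kappa_u)}$.
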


\section{Theorems}\label{s : Thm}

In this section, we aim to prove the main theorem of this paper, which gives an explicit formula for the Petersson pairing between a vector-valued cusp form and the Rankin–Cohen bracket of a vector-valued modular form and a Poincaré series.
Based on this result, we also investigate the adjoint maps of these bracket operators with respect to the Petersson inner product.
The main result of this paper is stated in the following theorem.

\begin{thm}\label{thm 3}
    For each $i\in \{1,2\}$, let $d_i$, $k_i$, $\chi_i$ and $\rho_i$ for $i\in \{1,2\}$ be as in Definition \ref{def 1}.
    Assume that $k_i>2+2\alpha(\rho_i)$.
    For each $j\in \{1,\dots,d_i\}$, let $\kappa_{i,j}$ be a real number in $[0,1)$ such that the $(j,j)$-th entry of $\chi_i\rho_i\left(T\right)=e^{2\pi i \kappa_{i,j}}$.
    Let $\nu$ be a positive integer. 
    Let $s$ be a non-negative integer and $r$ be a positive integer with $1\leq r\leq d_2$.
    For convenience, let $\mathbb{P}:=\mathbb{P}(\tau;\rho_2,k_2,\chi_2,s,r)$. 
    Assume that 
    \[f(\tau)=\sum_{j=1}^{d_1}\sum_{l=1}^{d_2} a_{j,l}(n+\kappa_{1,j}+\kappa_{2,l})e^{2 \pi i(n+\kappa_{1,j}+\kappa_{2,l})\tau}\mathbf{e_j}\otimes \mathbf{e_l'}\in S_{k_1+k_2+2\nu,\chi_1\chi_2,(\rho_1\otimes \rho_2)^{\vee}}\] 
    and 
    \[g(\tau)=\sum_{j=1}^{d_1}b_j(n+\kappa_{1,j}) e^{2\pi i (n+\kappa_{1,j})\tau} \mathbf{e_j}\in M_{k_1,\chi_1,\rho_1}.\]
    Then, we have 
    \begin{equation*}
        \begin{aligned}
            \langle f, [g,\mathbb{P}]_{\nu} \rangle &= \frac{\Gamma(k_1+k_2+2\nu-1)}{(4\pi)^{k_1+k_2+2\nu-1}}\sum_{j=1}^{d_1}\sum_{u=0}^{\nu} (-(s+\kappa_{2,r}))^{u}\binom{\nu}{u}\\
            &\times\frac{\Gamma(k_1+\nu)\Gamma(k_2+\nu)}{\Gamma(k_1+\nu-u)\Gamma(k_2+u)}\sum_{n\in \mathbb{Z}} (n+\kappa_{1,j})^{\nu-u}\frac{a_{j,r}(s+n+\kappa_{1,j}+\kappa_{2,r})\overline{b_j(n+\kappa_{1,j})}}{(s+n+\kappa_{1,j}+\kappa_{2,r})^{k_1+k_2+2\nu-1}}.
        \end{aligned}
    \end{equation*}
\end{thm}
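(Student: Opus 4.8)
The plan is to reduce the whole computation to the Rankin--Selberg unfolding that already proves Theorem \ref{thm 2}, by first recognizing that $[g,\mathbb{P}]_{\nu}$ is itself a Poincar\'e-type series whose seed is the bracket of $g$ with a single exponential. Write the seed of $\mathbb{P}$ as $\phi(\tau):=\exp(2\pi i(s+\kappa_{2,r})\tau)\mathbf{e}_r'$, so that Definition \ref{def 2} reads $\mathbb{P}=\tfrac12\sum_{M\in \mathrm{SL}_2(\mathbb{Z})/\langle T\rangle}\phi|_{k_2,\chi_2,\rho_2}M$. The key structural input is that the Rankin--Cohen bracket is a covariant differential operator: for arbitrary holomorphic $F,G$ and any $\gamma\in \mathrm{SL}_2(\mathbb{Z})$ one has the pointwise identity $[F|_{k_1,\chi_1,\rho_1}\gamma,\ G|_{k_2,\chi_2,\rho_2}\gamma]_{\nu}=[F,G]_{\nu}|_{k_1+k_2+2\nu,\chi_1\chi_2,\rho_1\otimes\rho_2}\gamma$, which is exactly the equivariance underlying Theorem \ref{thm 1}. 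Since $g$ is modular, $g|_{k_1,\chi_1,\rho_1}M=g$, so bilinearity of the bracket gives
\[ [g,\mathbb{P}]_{\nu}=\tfrac12\sum_{M}[g,\phi|M]_{\nu}=\tfrac12\sum_{M}[g|M,\phi|M]_{\nu}=\tfrac12\sum_{M}[g,\phi]_{\nu}\big|_{k_1+k_2+2\nu,\chi_1\chi_2,\rho_1\otimes\rho_2}M. \]
This exhibits $[g,\mathbb{P}]_{\nu}$ as a Poincar\'e series of weight $k_1+k_2+2\nu$ with seed $\Psi:=[g,\phi]_{\nu}$, so the very same unfolding used for Theorem \ref{thm 2} applies.

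Next I would unfold the Petersson integral of $f$ against this series. The hypothesis $k_i>2+2\alpha(\rho_i)$ guarantees absolute convergence, and the standard manipulation (exactly as in Theorem \ref{thm 2}, absorbing the factor $\tfrac12$ and the trivial action of $-I$) collapses the sum over $\mathrm{SL}_2(\mathbb{Z})/\langle T\rangle$ together with the fundamental domain into an integral over the strip:
\[ \langle f,[g,\mathbb{P}]_{\nu}\rangle=\int_{0}^{\infty}\!\!\int_{0}^{1}\sum_{j=1}^{d_1}\sum_{l=1}^{d_2}f_{j,l}(\tau)\overline{\Psi_{j,l}(\tau)}\,y^{k_1+k_2+2\nu}\frac{dx\,dy}{y^2}. \]
Using $\phi^{(\nu-i)}=(s+\kappa_{2,r})^{\nu-i}\phi$ and the Fourier expansion of $g$, the only nonvanishing components of $\Psi$ lie along $\mathbf{e}_j\otimes\mathbf{e}_r'$, and Definition \ref{def 1} gives them explicitly:
\[ \Psi_{j,r}(\tau)=\sum_{i=0}^{\nu}(-1)^{\nu-i}\binom{\nu}{i}\frac{\Gamma(k_1+\nu)\Gamma(k_2+\nu)}{\Gamma(k_1+i)\Gamma(k_2+\nu-i)}(s+\kappa_{2,r})^{\nu-i}\sum_{n}(n+\kappa_{1,j})^{i}b_j(n+\kappa_{1,j})\,e^{2\pi i(n+s+\kappa_{1,j}+\kappa_{2,r})\tau}. \]

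With this expansion the remaining work is two elementary integrals. The $x$-integral over $[0,1]$ of $f_{j,r}\overline{\Psi_{j,r}}$ is an orthogonality relation in the Fourier variable: it vanishes unless the frequency of $f_{j,r}$ matches that of $\Psi_{j,r}$, forcing $a_{j,r}(m+\kappa_{1,j}+\kappa_{2,r})$ with $m=n+s$, i.e. $a_{j,r}(s+n+\kappa_{1,j}+\kappa_{2,r})$. The surviving $y$-integral is a Gamma integral, $\int_0^\infty y^{k_1+k_2+2\nu-2}e^{-4\pi(n+s+\kappa_{1,j}+\kappa_{2,r})y}\,dy=\Gamma(k_1+k_2+2\nu-1)\big/(4\pi(n+s+\kappa_{1,j}+\kappa_{2,r}))^{k_1+k_2+2\nu-1}$. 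Pulling out the common factor $\Gamma(k_1+k_2+2\nu-1)/(4\pi)^{k_1+k_2+2\nu-1}$ and reindexing by $u=\nu-i$ (so $(-1)^{\nu-i}(s+\kappa_{2,r})^{\nu-i}=(-(s+\kappa_{2,r}))^{u}$, $\binom{\nu}{i}=\binom{\nu}{u}$, $(n+\kappa_{1,j})^{i}=(n+\kappa_{1,j})^{\nu-u}$, and the Gamma quotient becomes $\Gamma(k_1+\nu)\Gamma(k_2+\nu)/(\Gamma(k_1+\nu-u)\Gamma(k_2+u))$) reproduces the asserted formula term by term.

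The hard part will be the first step: establishing the covariance identity for the vector-valued Rankin--Cohen bracket and invoking the modularity of $g$ to recast $[g,\mathbb{P}]_{\nu}$ as a single Poincar\'e series with the explicit seed $[g,\phi]_{\nu}$. Once this is in place, everything else is the classical unfolding followed by the two beta/gamma integrals. I would also need to track the multiplier systems $\chi_1,\chi_2$ through the dual representation $(\rho_1\otimes\rho_2)^{\vee}$ and verify the $\tfrac12$ bookkeeping coming from $-I$, but these are handled exactly as in the scalar setting and in Theorem \ref{thm 2}; the genuine content is the covariance of the bracket, which is what makes the Poincar\'e-series structure, and hence the unfolding, available.
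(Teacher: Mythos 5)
Your proposal is correct and follows essentially the same route as the paper: both arguments identify $[g,\mathbb{P}]_\nu$ as a Poincar\'e series of weight $k_1+k_2+2\nu$ whose seed is $[g,\exp(2\pi i(s+\kappa_{2,r})\tau)\mathbf{e}_r']_\nu$, expand that seed using $\phi^{(\nu-i)}=(s+\kappa_{2,r})^{\nu-i}\phi$ and the Fourier series of $g$, and then extract coefficients via the Petersson pairing with a Poincar\'e series. The difference is only presentational: you obtain the Poincar\'e-series structure in one step from the general covariance of the bracket, which is precisely the identity the paper establishes by hand via the Leibniz expansion, the binomial symmetry $\binom{\nu}{i}\binom{\nu-i}{m}=\binom{\nu}{m}\binom{\nu-m}{i}$, and Zagier's formula (76), and you carry out the unfolding explicitly where the paper simply invokes Theorem \ref{thm 2}.
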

\begin{proof}
    We follow the ideas of the proofs in \cite[Lemma 4.3]{NX} and \cite[Proposition 2.2]{CKZ}.
    By the definitions of Rankin-Cohen bracket and the Poincar\'e series for vector-valued modular forms, we have
    \begin{equation*}
        \begin{aligned}
            [g,\mathbb{P}]_{\nu}&=\sum_{i=0}^{\nu}(-1)^{\nu-i} \binom{\nu}{i} \frac{\Gamma(k_1+\nu)\Gamma(k_2+\nu)}{\Gamma(k_1+i)\Gamma(k_2+\nu-i)}g^{(i)}(\tau)\otimes \mathbb{P}^{(\nu-i)}(\tau)\\
            &=\sum_{i=0}^{\nu}(-1)^{\nu-i}\binom{\nu}{i}\frac{\Gamma(k_1+\nu)\Gamma(k_2+\nu)}{\Gamma(k_1+i)\Gamma(k_2+\nu-i)}g^{(i)}(\tau)\\
            &\otimes \left(\frac{1}{2} \sum_{M\in \mathrm{SL}_{2}(\mathbb{Z})/\langle T \rangle } \chi_2^{-1}(M)(c\tau+d)^{-k_2}\mathrm{exp}(2\pi i(s+\kappa_{2,r})M\tau)\rho_2^{-1}(M)\mathbf{e}_{r}' \right)^{(\nu-i)}.
        \end{aligned}
    \end{equation*}
    By induction, we obtain that for $M=\sm a & b\\
    c & d \esm\in \mathrm{SL}_{2}(\mathbb{Z})$, 
    \begin{equation*}
        \begin{aligned}
            &\left((c\tau+d)^{-k_2}\exp(2\pi i (s+\kappa_{2,r})M\tau)\right)^{(\nu-i)}\\
            &=\sum_{m=0}^{\nu-i}\binom{\nu-i}{m}\frac{\Gamma(k_2+\nu-i)}{\Gamma(k_2+m)}(-c)^{\nu-i-m}(s+\kappa_{2,r})^{m}(c\tau+d)^{-k_2-(\nu-i-m)}\exp(2\pi i (s+\kappa_{2,r})M\tau).
        \end{aligned}
    \end{equation*}
    Thus, we obtain
    \begin{equation*}
        \begin{aligned}
            [g,\mathbb{P}]_{\nu}&=\sum_{i=0}^{\nu}(-1)^{\nu-i}\binom{\nu}{i}\frac{\Gamma(k_1+\nu)\Gamma(k_2+\nu)}{\Gamma(k_1+i)\Gamma(k_2+\nu-i)}g^{(i)}(\tau)\\
            &\otimes \bigg(\frac{1}{2} \sum_{M\in \mathrm{SL}_{2}(\mathbb{Z})/\langle T \rangle } \chi_2^{-1}(M) \sum_{m=0}^{\nu-i} \binom{\nu-i}{m}\frac{\Gamma(k_2+\nu-i)}{\Gamma(k_2+m)}(-c)^{\nu-i-m}(s+\kappa_{2,r})^{m}\\
            &\times (c\tau+d)^{-k_2-(\nu-i-m)}\exp(2\pi i (s+\kappa_{2,r})M\tau)\rho_2^{-1}(M)\mathbf{e_r'}.  \bigg).
        \end{aligned}
    \end{equation*}
    Since $\binom{\nu}{i}\binom{\nu-i}{m}=\binom{\nu}{m}\binom{\nu-m}{i}$, the expression is symmetric in $i$ and $m$, and hence we get
    \begin{equation*}
        \begin{aligned}
            [g,\mathbb{P}]_{\nu}&=\frac{1}{2}\sum_{M\in \mathrm{SL}_{2}(\mathbb{Z})/\langle T \rangle } (\chi_1\chi_2(M))^{-1} \sum_{i=0}^{\nu}  (s+\kappa_{2,r})^{i}(-1)^{i}\binom{\nu}{i} \frac{\Gamma(k_1+\nu)\Gamma(k_2+\nu)}{\Gamma(k_2+i)\Gamma(k_1+\nu-i)}\\
            &\times (c\tau+d)^{-k_1-k_2-2\nu}\exp(2\pi i (s+\kappa_{2,r})M\tau)\\
            &\times\sum_{m=0}^{\nu-i} \chi_1(M) \binom{\nu-i}{m} \frac{\Gamma(k_1+\nu-i)}{\Gamma(k_1+m)}c^{\nu-i-m}(c\tau+d)^{k_1+\nu-i-m}(g^{(m)}(\tau)\otimes \rho_2^{-1}(M) \mathbf{e_r'}).
        \end{aligned}
    \end{equation*}
    By \cite[(76)]{ZA}, we have 
    \begin{equation*}
        \begin{aligned}
            \sum_{m=0}^{\nu-i}\chi_1(M) \binom{\nu-i}{m} \frac{\Gamma(k_1+\nu-i)}{\Gamma(k_1+m)}c^{\nu-i-m}(c\tau+d)^{k_1+\nu-i-m}g^{(m)}(\tau) = \rho_1^{-1}(M)g^{(\nu-i)}(M\tau).
        \end{aligned}
    \end{equation*}
    Thus, we get 
    \begin{equation*}
        \begin{aligned}
            [g,\mathbb{P}]_{\nu}&=\frac{1}{2}\sum_{M\in \mathrm{SL}_{2}(\mathbb{Z})/\langle T \rangle } (\chi_1\chi_2(M))^{-1} \sum_{i=0}^{\nu}  (s+\kappa_{2,r})^{i}(-1)^{i}\binom{\nu}{i} \frac{\Gamma(k_1+\nu)\Gamma(k_2+\nu)}{\Gamma(k_2+i)\Gamma(k+1+\nu-i)}\\
            &\times (c\tau+d)^{-k_1-k_2-2\nu}\exp(2\pi i (s+\kappa_{2,r})M\tau)\left(\rho_1^{-1}(M)g^{(\nu-i)}(M\tau)\otimes \rho_2^{-1}(M) \mathbf{e_r'} \right).
        \end{aligned}
    \end{equation*}
    For convenience, let $\chi:=\chi_1\chi_2$ and $\rho:=\rho_1\otimes \rho_2$. 
    Note that 
    \begin{equation*}
        \begin{aligned}
            &\left(\exp(2\pi i (s+\kappa_{2,r})\tau)g^{(\nu-i)}(\tau)\otimes \mathbf{e'_r} \right)|_{k_1+k_2+2\nu,\chi,\rho}M\\
            &=\left(\chi_1\chi_2(M)\right)^{-1}(c\tau+d)^{-k_1-k_2-2\nu}\exp\left(2\pi i (s+\kappa_{2,r})M\tau\right)\rho_1^{-1}(M)g^{(\nu-i)}(M\tau)\otimes \rho_2^{-1}(M)\mathbf{e'_r}.
        \end{aligned}
    \end{equation*}
    Hence, we get 
    \begin{equation*}
        \begin{aligned}
            [g,\mathbb{P}]_{\nu}&=\frac{1}{2}\sum_{M\in \mathrm{SL}_{2}(\mathbb{Z})/\langle T \rangle }\sum_{i=0}^{\nu}(s+\kappa_{2,r})^{i}(-1)^{i}\binom{\nu}{i}\frac{\Gamma(k_1+\nu)\Gamma(k_2+\nu)}{\Gamma(k_2+i)\Gamma(k_1+\nu-i)}\\
            &\times \left(\exp(2\pi i (s+\kappa_{2,r})\tau)g^{(\nu-i)}(\tau) \otimes \mathbf{e_r'}\right)|_{k_1+k_2+2\nu,\chi,\rho}M\\
            &=\frac{1}{2}\sum_{M\in \mathrm{SL}_{2}(\mathbb{Z})/\langle T \rangle}\sum_{i=0}^{\nu}\bigg( (s+\kappa_{2,r})^{i}(-1)^{i}\binom{\nu}{i}\frac{\Gamma(k_1+\nu)\Gamma(k_2+\nu)}{\Gamma(k_2+i)\Gamma(k_1+\nu-i)}\exp(2\pi i (s+\kappa_{2,r})\tau)\\
            &g^{(\nu-i)}(\tau) \otimes \mathbf{e_r'}\bigg)|_{k_1+k_2+2\nu,\chi,\rho}M.
        \end{aligned}
    \end{equation*}

    Since $g(\tau)=\sum_{j=1}^{d} b_j(n+\kappa_{1,j})e^{2\pi i (n+\kappa_{1,j})\tau}\mathbf{e_j}$, we have 
    \begin{equation*}
        \begin{aligned}
            g^{(\nu-i)}(\tau)=\sum_{j=1}^{d_1}\sum_{n=0}^{\infty} (2\pi i (n+\kappa_j))^{\nu-i} b_j(n+\kappa_{1,j}) e^{2\pi i (n+\kappa_{1,j})\tau}\mathbf{e_j}.
        \end{aligned}
    \end{equation*}
    Thus, we obtain
    \begin{equation}\label{eq 1}
        \begin{aligned}
            [g,\mathbb{P}]_{\nu}&=\frac{1}{2}\sum_{M\in \mathrm{SL}_{2}(\mathbb{Z})/\langle T \rangle}\sum_{i=0}^{\nu}\bigg( (s+\kappa_{2,r})^{i}(-1)^{i}\binom{\nu}{i}\frac{\Gamma(k_1+\nu)\Gamma(k_2+\nu)}{\Gamma(k_2+i)\Gamma(k_1+\nu-i)}\exp(2\pi i (s+\kappa_{2,r})\tau)\\
            &\times\sum_{j=1}^{d_1}\sum_{n=0}^{\infty}  (n+\kappa_{1,j})^{\nu-i} b_j(n+\kappa_{1,j}) \exp(2\pi i (n+\kappa_{1,j})\tau)\mathbf{e_j}
            \otimes \mathbf{e_r'}\bigg)|_{k_1+k_2+2\nu,\chi,\rho}M\\
            &=\frac{1}{2}\sum_{i=0}^{\nu}\sum_{j=1}^{d_1}\sum_{n=0}^{\infty}(s+\kappa_{2,r})^{i}(-1)^{i}\binom{\nu}{i}\frac{\Gamma(k_1+\nu)\Gamma(k_2+\nu)}{\Gamma(k_2+i)\Gamma(k_1+\nu-i)} (n+\kappa_{1,j})^{\nu-i} b_j(n+\kappa_{1,j})\\
            &\times \sum_{M\in \mathrm{SL}_{2}(\mathbb{Z})/\langle T \rangle} \left(\exp(2\pi i (s+n+\kappa_{1,j}+\kappa_{2,r})\tau) \mathbf{e_j}\otimes \mathbf{e_r'}\right)|_{k_1+k_2+2\nu,\chi,\rho}M.
        \end{aligned}
    \end{equation}
    
    For a non-negative integer $n$, let
    \[ \mathbb{P}_{s+n}(\tau):=\frac{1}{2}\sum_{M\in \mathrm{SL}_{2}(\mathbb{Z})/\langle T \rangle} \left(\exp(2\pi i (s+n+\kappa_j+\kappa'_r)\tau) \mathbf{e_j}\otimes \mathbf{e_r'}\right)|_{k_1+k_2+2\nu,\chi,\rho}M. \]
    Note that $\mathbb{P}_{s+n}(\tau)$ is a vector-valued Poincar\'e series of weight $k_1+k_2+2\nu$. 
    Combining \eqref{eq 1} and Theorem \ref{thm 2}, we complete the proof of Theorem \ref{thm 3}.
 \end{proof}

Assume that $g\in M_{k_1,\chi_1,\rho_1}$ and $\nu$ is a positive integer.
For $f\in S_{k_2,\chi_2,\rho_2}$, we define 
\[ T_{g,\nu}(f):=[f,g]_{\nu}. \]
By Theorem \ref{thm 1}, $T_{g,\nu}$ is a linear map from $S_{k_2,\chi_2,\rho_2}$ to $S_{k_1+k_2+2\nu,\chi_1\chi_2,\rho_1\otimes \rho_2}$. 
Since $\rho_1$ and $\rho_2$ are unitary representations, there is an adjoint map $T_{g,\nu}^{*}: S_{k_1+k_2+2\nu,\chi_1\chi_2,\rho_1\otimes\rho_2} \to S_{k_2,\chi_2,\rho_2}$ such that 
\[ \langle T_{g,\nu}(f),h \rangle = \langle f, T_{g,v}^{*}(h) \rangle  \]
for all $f\in S_{k_2,\chi_2,\rho_2}$ and $h\in S_{k_1+k_2+2\nu,\chi_1\chi_2,\rho_1\otimes\rho_2}$.
By using Theorem \ref{thm 3}, we describe the adjoint map in the following theorem.

\begin{thm}\label{thm 4}
    For each $i\in \{1,2\}$ and $j\in \{1,\dots,d_i\}$, let $d_i$, $k_i$, $\chi_i$ and $\rho_i$ for $i\in \{1,2\}$ be as in Definition \ref{def 1}.
     Assume that $k_i>2+2\alpha(\rho_i)$.
    For each $j\in \{1,\dots,d_i\}$, let $\kappa_{i,j}$ be as in Theorem \ref{thm 3}.
    Let $g(\tau)=\sum_{j=1}^{d_1}\sum_{n=0}^{\infty} b_j(n+\kappa_{1,j})e^{2\pi i (n+\kappa_{1,j})\tau}\mathbf{e_j}\in M_{k_1,\chi_1,\rho_1}$ and $\nu$ be a positive integer. 
    Let $T_{g,\nu} : S_{k_2,\chi_2,\rho_2}\to S_{k_1+k_2+2\nu,\chi_1\chi_2,\rho_1\otimes \rho_2}$ be the linear map defined by $T_{g,\nu}(f):=[f,g]_{\nu}$. 
    If 
    \[h(\tau)=\sum_{j=1}^{d_1}\sum_{l=1}^{d_2} \sum_{n\in \mathbb{Z}} a_{j,l}(n+\kappa_{1,j}+\kappa_{2,l})e^{2\pi i (n+\kappa_{1,j}+\kappa_{2,l})\tau}\mathbf{e_j}\otimes \mathbf{e_l'}\in S_{k_1+k_2+2\nu,\chi_1\chi_2,\rho_1\otimes\rho_2},\] 
    then 
    \[ T_{g,v}^{*}(h)(\tau)=\sum_{l=1}^{d_2}\sum_{n\in \mathbb{Z}}c_l(n+\kappa_{2,l})e^{2\pi i (n+\kappa_{2,l})\tau}\mathbf{e_l'}\in S_{k_2,\chi_2,\rho_2},\]
    where
    \begin{equation*}
        \begin{aligned}
            c_{l}(n+\kappa_{2,l})&=(-1)^{\nu}(4\pi(n+\kappa_{2,l}))^{k_2-1}\frac{\Gamma(k_1+k_2+2\nu-1)}{(4\pi)^{k_1+k_2+2\nu-1}}\sum_{j=1}^{d_1}\sum_{u=0}^{\nu} (-(n+\kappa_{2,l}))^{u}\binom{\nu}{u}\\
            &\times\frac{\Gamma(k_1+\nu)\Gamma(k_2+\nu)}{\Gamma(k_1+\nu-u)\Gamma(k_2+u)\Gamma(k_2-1)}\sum_{t\in \mathbb{Z}} (t+\kappa_{1,j})^{\nu-u}\frac{a_{j,l}(n+t+\kappa_{1,j}+\kappa_{2,l})\overline{b_j(t+\kappa_{1,j})}}{(n+t+\kappa_{1,j}+\kappa_{2,l})^{k_1+k_2+2\nu-1}}.
        \end{aligned}
    \end{equation*}
\end{thm}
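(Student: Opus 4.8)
The plan is to recover the Fourier coefficients $c_l(n+\kappa_{2,l})$ of the cusp form $T_{g,\nu}^{*}(h)$ by pairing it against Poincar\'e series in the manner of Theorem \ref{thm 2}, and then to transport that pairing across the adjoint so that it is computed by Theorem \ref{thm 3}. Concretely, I would fix $l\in\{1,\dots,d_2\}$ and an index $n$ with $n+\kappa_{2,l}>0$ (these are the only indices that occur in a cusp form), and set $\mathbb{P}_l:=\mathbb{P}(\tau;\rho_2,k_2,\chi_2,n,l)$. Since $\rho_2$ is unitary we have $\rho_2^{\vee}=\rho_2$, so $\mathbb{P}_l\in S_{k_2,\chi_2,\rho_2}$ and Theorem \ref{thm 2} gives $\langle \mathbb{P}_l, T_{g,\nu}^{*}(h)\rangle=\overline{c_l(n+\kappa_{2,l})}\,\Gamma(k_2-1)/(4\pi(n+\kappa_{2,l}))^{k_2-1}$. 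Using the Hermitian relation $\overline{\langle a,b\rangle}=\langle b,a\rangle$ for the Petersson pairing and the fact that the $\Gamma$- and power-factors are real, this rearranges to $\langle T_{g,\nu}^{*}(h),\mathbb{P}_l\rangle=c_l(n+\kappa_{2,l})\,\Gamma(k_2-1)/(4\pi(n+\kappa_{2,l}))^{k_2-1}$, so it suffices to evaluate the left-hand side.

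Next I would move the Poincar\'e series across the adjoint. Conjugating the defining identity $\langle T_{g,\nu}(f),h\rangle=\langle f,T_{g,\nu}^{*}(h)\rangle$ gives $\langle h,T_{g,\nu}(f)\rangle=\langle T_{g,\nu}^{*}(h),f\rangle$, and specializing $f=\mathbb{P}_l$ yields $\langle T_{g,\nu}^{*}(h),\mathbb{P}_l\rangle=\langle h,T_{g,\nu}(\mathbb{P}_l)\rangle=\langle h,[\mathbb{P}_l,g]_{\nu}\rangle$. The only gap between this and Theorem \ref{thm 3} is that the latter computes $\langle f,[g,\mathbb{P}]_{\nu}\rangle$, with the Poincar\'e series in the \emph{second} slot. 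I would close this gap with the symmetry of the bracket: a direct reindexing $i\mapsto \nu-i$ in Definition \ref{def 1}, together with the identity $\binom{\nu}{i}\binom{\nu-i}{m}=\binom{\nu}{m}\binom{\nu-m}{i}$ already used in the proof of Theorem \ref{thm 3}, shows that $[\mathbb{P}_l,g]_{\nu}=(-1)^{\nu}\sigma([g,\mathbb{P}_l]_{\nu})$, where $\sigma:\mathbb{C}^{d_1}\otimes\mathbb{C}^{d_2}\to\mathbb{C}^{d_2}\otimes\mathbb{C}^{d_1}$ is the flip of tensor factors. Under the identification $\rho_1\otimes\rho_2\cong\rho_2\otimes\rho_1$ built into the declared codomain of $T_{g,\nu}$, this reads $T_{g,\nu}(\mathbb{P}_l)=(-1)^{\nu}[g,\mathbb{P}_l]_{\nu}$, and since $\sigma$ is a Petersson isometry we obtain $\langle h,T_{g,\nu}(\mathbb{P}_l)\rangle=(-1)^{\nu}\langle h,[g,\mathbb{P}_l]_{\nu}\rangle$.

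Finally I would apply Theorem \ref{thm 3} verbatim with $f=h$, $s=n$, and $r=l$ to evaluate $\langle h,[g,\mathbb{P}_l]_{\nu}\rangle$ in terms of the coefficients $a_{j,l}$ and $\overline{b_j}$, and then solve the identity from the first paragraph for $c_l(n+\kappa_{2,l})$, multiplying through by $(4\pi(n+\kappa_{2,l}))^{k_2-1}/\Gamma(k_2-1)$ and collecting the factor $(-1)^{\nu}$. The factor $\Gamma(k_2-1)$ then lands in the denominator alongside $\Gamma(k_1+\nu-u)\Gamma(k_2+u)$, reproducing the stated formula exactly. The routine points—absolute convergence permitting the interchange of the $M$-summation with the Petersson integral, and the automatic vanishing of contributions with $n+\kappa_{2,l}=0$—are inherited from Theorems \ref{thm 2} and \ref{thm 3}.

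I expect the main obstacle to be the bookkeeping in the second step: pinning down the $(-1)^{\nu}$ sign together with the tensor-flip identification $\sigma$ so that the pairing against $h\in S_{k_1+k_2+2\nu,\chi_1\chi_2,\rho_1\otimes\rho_2}$ matches the hypotheses of Theorem \ref{thm 3} on the nose, while simultaneously tracking the complex conjugations carefully enough that the unconjugated coefficient $c_l(n+\kappa_{2,l})$ (rather than $\overline{c_l(n+\kappa_{2,l})}$) emerges at the end. Everything else is a substitution into the already-established Theorems \ref{thm 2} and \ref{thm 3}.
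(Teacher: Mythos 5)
Your proposal is correct and follows essentially the same route as the paper's proof: recover the coefficients by pairing $T_{g,\nu}^{*}(h)$ against Poincar\'e series via Theorem \ref{thm 2}, move the Poincar\'e series across the adjoint, use $[\mathbb{P},g]_{\nu}=(-1)^{\nu}[g,\mathbb{P}]_{\nu}$ (up to the tensor flip), and apply Theorem \ref{thm 3}. You are in fact somewhat more careful than the paper about the Hermitian conjugation, the identification $\rho_2^{\vee}=\rho_2$ for unitary $\rho_2$, and the tensor-factor swap, all of which the paper leaves implicit.
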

\begin{proof}
By Theorems \ref{thm 2} and \ref{thm 3}, we have
\begin{equation*}
\begin{aligned}
    &c_l(n+\kappa_{2,l})\frac{\Gamma(k_2-1)}{(4\pi(n+\kappa_{2,l}))^{k_2-1}}\\
    &=\langle T_{g,\nu}^{*}(h),\mathbb{P}(\tau;k_2,\rho_2,\chi_2,n,l) \rangle\\
    &=\langle h, T_{g,\nu}(\mathbb{P}(\tau;k_2,\rho_2,\chi_2,n,l)) \rangle\\
    &= \langle h, [\mathbb{P}(\tau;k_2,\rho_2,\chi_2,n,l),g]_{\nu} \rangle\\
    &= (-1)^{\nu}\langle h, [g,\mathbb{P}(\tau;k_2,\rho_2,\chi_2,n,l)]_{\nu} \rangle\\
    &= (-1)^{\nu}\frac{\Gamma(k_1+k_2+2\nu-1)}{(4\pi)^{k_1+k_2+2\nu-1}}\sum_{j=1}^{d_1}\sum_{u=0}^{\nu} (-(n+\kappa_{2,l}))^{u}\binom{\nu}{u}\\
            &\times\frac{\Gamma(k_1+\nu)\Gamma(k_2+\nu)}{\Gamma(k_1+\nu-u)\Gamma(k_2+u)}\sum_{t\in \mathbb{Z}} (t+\kappa_{1,j})^{\nu-u}\frac{a_{j,l}(n+t+\kappa_{1,j}+\kappa_{2,l})\overline{b_j(t+\kappa_{1,j})}}{(n+t+\kappa_{1,j}+\kappa_{2,l})^{k_1+k_2+2\nu-1}}.
\end{aligned}
\end{equation*}
Thus, we complete the proof of Theorem \ref{thm 4}.
\end{proof}

\section{Case of Jacobi forms}\label{s : Jacobi forms}

In this section, we review the notion of Jacobi forms of real weight on $\mathrm{SL}_{2}(\mathbb{Z})$.
For details, we refer to \cite[Section 2]{LL}.  We extend the Petersson inner product for Jacobi cusp forms and highlight the result from Theorem \ref{thm 5}, which describes the decomposition of Jacobi forms in terms of theta functions.  We then extend the theory by defining the extended Rankin-Cohen brackets for pairs of Jacobi forms and modular forms, and we demonstrate that these brackets yield Jacobi cusp forms. We then relate these extended Rankin-Cohen brackets for Jacobi forms to the Rankin-Cohen brackets for the corresponding vector-valued modular forms via an isomorphism. Building on this, we investigate the Petersson pairing involving these extended brackets and Poincaré series in the context of the isomorphic vector-valued forms. Finally, we conclude this section by defining a linear map based on the holomorphic extended Rankin-Cohen bracket that explicitly describes its adjoint map.

Let $\Gamma^{J}:=\SL_2(\mathbb{Z})\rtimes \mathbb{Z}^{2}$ be the Jacobi group. 
Let $m$ be a positive integer and $k$ be a real number. 
Let $\chi$ be a multiplier system of weight $k$ on $\mathrm{SL}_{2}(\mathbb{Z})$.
Given $M:=\left[\sm a & b\\
c & d\esm, (\lambda,\mu) \right]\in \Gamma^{J}$ and a function $f:\mathbb{H}\times \mathbb{C}\to \mathbb{C}$, we define $f|_{k,m,\chi}M$ by
\begin{equation*}
    \begin{aligned}
    \left(f|_{k,m,\chi}M\right)(\tau,z):=&\chi^{-1}\left(\left(\begin{smallmatrix}
    a & b\\
    c & d
    \end{smallmatrix}\right)\right)f\left(\frac{a\tau+b}{c\tau+d}, \frac{z+\lambda \tau+\mu}{c\tau+d}\right)\\
    &\times(c\tau+d)^{-k}\exp\left(2\pi i m\left(-\frac{c(z+\lambda\tau+\mu)^2}{c\tau+d}+\lambda^2\tau+2\lambda z\right)\right).
    \end{aligned}
\end{equation*}

\begin{dfn}\label{def : def of Jacobi form}
    A function $f:\mathbb{H}\times \mathbb{C}\to \mathbb{C}$ is called a Jacobi form of weight $k$, index $m$, and multiplier system $\chi$ on $\mathrm{SL}_{2}(\mathbb{Z})$ if it is holomorphic in $\mathbb{H}\times \mathbb{C}$ and satisfies the following conditions: 
    \begin{enumerate}
        \item   $\left(f|_{k,m,\chi}[\gamma,X]\right)(\tau,z)=f(\tau,z)$ for every $[\gamma,X]\in \Gamma^{J}$, and
        \item there is a real number $\kappa\in [0,1)$ such that 
        \[ f(\tau,z)=\sum_{\substack{n,r\in \ZZ \\ r^2-4m(n+\kappa)\leq 0}}c(n+\kappa,r)e^{2\pi i(n+\kappa)\tau}e^{2\pi i rz}. \]
    \end{enumerate}
\end{dfn}
A Jacobi form $f$ is called a Jacobi cusp form if $c(n+\kappa,r)=0$ for all $(n,r)\in \mathbb{Z}^{2}$ such that $r^2-4m(n+\kappa)=0$. 
Let $J_{k,m,\chi}$ (resp. $J^{cusp}_{k,m,\chi}$) denote the space of Jacobi forms (resp. Jacobi cusp forms) of weight $k$, index $m$, and multiplier system $\chi$ on $\mathrm{SL}_{2}(\mathbb{Z})$.
In the following, we define the Petersson inner product of Jacobi cusp forms. 

\begin{dfn}\label{def : def of Petersson inner product Jacobi}
Assume that $f$ and $g$ are in $J^{cusp}_{k,m,\chi}$. 
The Petersson inner product $\langle f, g \rangle$ of $f$ and $g$ is defined by 
\[ \langle f , g \rangle := \int_{\Gamma^{J}\backslash \mathbb{H}\times \mathbb{C}} f(\tau,z)\overline{g(\tau,z)} e^{-\frac{4\pi m v^2}{y}}y^{k} \frac{dxdydudv}{y^3},  \]
where $\tau=x+iy\in \mathbb{H}$ and $z=u+iv\in \mathbb{C}$. 
\end{dfn}

For an integer $\mu$ with $\mu\in \{1,\dots, 2m\}$, we define $\theta_{m,\mu}:\mathbb{H}\times \mathbb{C}\to \mathbb{C}$ by 
\[ \theta_{m,\mu}(\tau,z):=\sum_{\substack{r\in \ZZ \\ r\equiv \mu \pmod{2m}}} e^{2\pi i \frac{r^2}{4m}\tau} e^{2\pi i r z}.\]
An important connection between Jacobi forms and vector-valued modular forms is provided by the theta decomposition.
This decomposition expresses a Jacobi form as a linear combination of theta functions $\theta_{m,\mu}$ with coefficients that are functions on $\mathbb{H}$.
The following theorem describes how a Jacobi form gives rise to a vector-valued modular form.

\begin{thm}\label{thm 5}\cite[Section 5]{EZ1}
    Assume that $f\in J_{k,m,\chi}$. 
    Then, there is a unique tuple of functions $\{f_{\mu}\}_{\mu\in \{1,\dots, 2m\}}$ such that $f_{\mu} : \mathbb{H}\to \mathbb{C}$ for each $\mu\in \{1,\dots, 2m\}$ and that 
    \[ f(\tau,z)=\sum_{\mu=1}^{2m} f_{\mu}(\tau) \cdot \theta_{m,\mu}(\tau,z). \]
    Moreover, $F(\tau):=\sum_{\mu=1}^{2m} f_{\mu}(\tau) \mathbf{e_{\mu}} : \mathbb{H}\to \mathbb{C}^{2m}$ is a vector-valued modular form of weight $k-\frac{1}{2}$ and multiplier system $\chi'$ with respect to $\rho'$ on $\mathrm{SL}_{2}(\mathbb{Z})$. 
    Here, $\{\mathbf{e_1},\dots, \mathbf{e_{2m}}\}$ is the standard basis of $\mathbb{C}^{2m}$.
    (For the definitions of $\chi'$ and $\rho'$, see \cite[Section 2]{CL})
\end{thm}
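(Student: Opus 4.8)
The plan is to recover the functions $f_\mu$ from the Fourier expansion of $f$ by sorting its terms according to the residue of the $z$-frequency modulo $2m$, and then to read off the modular behavior of $F$ from the known transformation law of the theta functions $\theta_{m,\mu}$ under $\SL_2(\ZZ)$. The decomposition itself is a consequence of the elliptic (i.e.\ $\ZZ^2$) invariance, while the modularity of $F$ is a consequence of the full Jacobi transformation law combined with that of the theta vector.

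First I would exploit the elliptic transformation law. Applying condition (1) of Definition \ref{def : def of Jacobi form} to the elements $[\mathrm{id},(0,1)]$ and $[\mathrm{id},(1,0)]$ of $\Gamma^{J}$ gives
\[ f(\tau,z+1)=f(\tau,z), \qquad f(\tau,z+\tau)=\exp(-2\pi i m(\tau+2z))f(\tau,z). \]
The first relation is already encoded in the integrality of $r$ in the Fourier expansion; substituting that expansion into the second and comparing coefficients of $e^{2\pi i(n+\kappa)\tau}e^{2\pi i rz}$ shows that $c(n+\kappa,r)$ depends only on the discriminant $D:=r^{2}-4m(n+\kappa)$ and on $r\bmod 2m$. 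Writing $n+\kappa=\frac{r^{2}-D}{4m}$ and factoring $e^{2\pi i(n+\kappa)\tau}=e^{2\pi i\frac{r^{2}}{4m}\tau}e^{-2\pi i\frac{D}{4m}\tau}$, I can group the double Fourier series by the class $\mu\equiv r\pmod{2m}$ and pull out the $\tau$-series whose coefficients depend only on $D$; this produces exactly $f(\tau,z)=\sum_{\mu=1}^{2m}f_{\mu}(\tau)\,\theta_{m,\mu}(\tau,z)$ with $f_{\mu}(\tau)=\sum_{D}C_{\mu}(D)e^{-2\pi i\frac{D}{4m}\tau}$. Uniqueness is then immediate, since the $\theta_{m,\mu}$ have disjoint $z$-frequency supports (the residue class $r\bmod 2m$), so the decomposition of the $z$-Fourier series is forced; holomorphy of each $f_{\mu}$ and the correct behavior at the cusp follow because the cusp condition $D\le 0$ bounds the exponents of $f_\mu$ from below.

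Next, to obtain the modularity of $F$, I would invoke the transformation law of the theta vector $\Theta=(\theta_{m,\mu})_{\mu}$ under the generators $S=\sm 0 & -1 \\ 1 & 0 \esm$ and $T$ of $\SL_2(\ZZ)$. Under $T$ the $\theta_{m,\mu}$ transform diagonally by roots of unity, and under $S$ they transform among themselves by a Gauss-sum (discrete Fourier) matrix, in each case with automorphy factor $(c\tau+d)^{1/2}\exp\!\big(\tfrac{2\pi i m c z^{2}}{c\tau+d}\big)$; together these assemble into a weight-$\tfrac12$ vector-valued transformation for a Weil-type representation. Substituting $f=\sum_{\mu}f_{\mu}\,\theta_{m,\mu}$ into the Jacobi transformation law $f|_{k,m,\chi}\gamma=f$, the $z$-dependent exponential of the Jacobi cocycle cancels exactly against the one coming from $\Theta$, and the factor $(c\tau+d)^{-k}$ combines with $(c\tau+d)^{1/2}$ to give weight $k-\tfrac12$. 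Comparing coefficients of the linearly independent $\theta_{m,\nu}$ then forces the vector-valued invariance of $F$ for weight $k-\tfrac12$, with $\rho'$ the representation dual to the theta representation (twisted by $\chi$) and $\chi'$ the induced half-integral multiplier, as specified in \cite{CL}.

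I expect the main obstacle to be the bookkeeping in this last step: verifying that the half-integral automorphy factor, the Gauss sum in the $S$-transformation of $\Theta$, and the multiplier system $\chi$ of $f$ combine precisely into the stated multiplier $\chi'$ and representation $\rho'$, including the correct eighth root of unity. The theta decomposition and its uniqueness are essentially formal, but pinning down $\chi'$ and $\rho'$ requires the explicit Weil representation data; since it suffices to check the transformation on the two generators $S$ and $T$, the argument reduces to matching these two cocycle computations against the definitions of \cite{CL}.
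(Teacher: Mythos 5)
The paper offers no proof of this statement, citing it to \cite[Section 5]{EZ1}; your argument reproduces the standard proof from that source (and its real-weight extension in \cite{CL}): the elliptic transformation law forces $c(n+\kappa,r)$ to depend only on the discriminant and on $r\bmod 2m$, which yields the theta decomposition and its uniqueness, and the weight-$\frac12$ Weil-type transformation of the theta vector under $S$ and $T$ then gives the modularity of $F$ in weight $k-\frac12$. This is correct and is essentially the same approach as the cited proof; the only part left implicit, matching the two cocycle computations against the explicit $\chi'$ and $\rho'$ of \cite{CL}, is exactly the bookkeeping you flag.
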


By Theorem \ref{thm 5}, we can define a map $\Psi:J_{k,m,\chi}\to M_{k-\frac{1}{2},\chi',\rho'}$ by
\[ \Psi(f)(\tau):=\sum_{\mu=1}^{2m} f_{\mu}(\tau) \mathbf{e_{\mu}}, \]
where $f(\tau,z)=\sum_{\mu=1}^{2m}f_{\mu}(\tau)\cdot \theta_{m,\mu}(\tau,z)$.
Note that $\Psi$ is an isomorphism.
Moreover, under the isomorphism $\Psi$, the Petersson inner products on Jacobi forms and vector-valued modular forms are compatible up to a constant factor.  
By \cite[Theorem 5.3]{EZ1}, if $f,g\in J^{cusp}_{k,m,\chi}$, then 
\[ \langle f,g \rangle = \frac{1}{\sqrt{2m}} \langle  \Psi(f), \Psi(g) \rangle.  \]

The Rankin-Cohen bracket is extended to pairs of a Jacobi form and a modular form by using the operator
\[ L_m := -\frac{1}{4\pi^2}\left(8\pi i m \frac{\partial}{\partial \tau} - \frac{\partial^2}{\partial z^2} \right),\]
where $m$ is a positive integer (for example, see \cite{K}).

\begin{dfn}\cite[Definition 1.1]{K}\label{def 3}
    Let $f$ be a function on $\mathbb{H}\times \mathbb{C}$ and $g$ be a function on $\mathbb{H}$. 
    Let $k$ and $\ell$ be real numbers, and $m$ and $\nu$ be positive integers. 
    The extended $\nu$-th Rankin-Cohen brackets $[f,g]^{\mathrm{hol}}_{k,m,l,\nu}$ is defined by 
    \[[f,g]^{\mathrm{hol}}_{k,m,l,\nu}(\tau,z):=\sum_{r+s=\nu}\binom{\nu+k-3/2}{s}\binom{\nu+l-1}{r}(-4m)^s L_m^{r}(f(\tau,z))D^s_{\tau}(g(\tau)). \]
\end{dfn}

Similar to the case of vector-valued modular forms, the following proposition shows that the extended Rankin-Cohen bracket of a Jacobi form and a modular form is a Jacobi cusp form.

\begin{prop}\cite[Proposition 1.3]{K}\label{prop 1}
    Let $\nu$ be a positive integer. 
    Let $k$ and $l$ be real numbers.
    Let $\chi_1$ (resp. $\chi_2$) be a multiplier system of weight $k$ (resp. $l$) on $\mathrm{SL}_{2}(\mathbb{Z})$.
    Assume that $f\in J_{k,m,\chi_1}$ and that $g$ is a modular form of weight $l$ and multiplier system $\chi_2$ on $\mathrm{SL}_{2}(\mathbb{Z})$. 
    Then, we have 
    \[[f,g]^{\mathrm{hol}}_{k,m,l,\nu}\in J^{cusp}_{k+l+2\nu,m,\chi_1\chi_2}. \]
\end{prop}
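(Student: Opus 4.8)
The plan is to reduce the statement to its vector-valued counterpart through the theta decomposition $\Psi$ of Theorem \ref{thm 5}, so that the modularity and cuspidality are inherited directly from Theorem \ref{thm 1}. The key preliminary fact is that the theta functions are annihilated by the operator $L_m$: for a single exponential $e^{2\pi i(r^2/4m)\tau+2\pi irz}$ one has $8\pi im\,\partial_\tau=-4\pi^2r^2=\partial_z^2$, so $L_m\theta_{m,\mu}=0$. Writing $f=\sum_{\mu=1}^{2m}f_\mu\theta_{m,\mu}$ and noting that each $f_\mu$ is a function of $\tau$ alone, the Leibniz rule gives $L_m(f_\mu\theta_{m,\mu})=4m\,(D_\tau f_\mu)\theta_{m,\mu}$, and hence by induction
\[ L_m^{r}(f)=(4m)^{r}\sum_{\mu=1}^{2m}(D_\tau^{r}f_\mu)\,\theta_{m,\mu},\qquad\text{i.e.}\qquad \Psi\bigl(L_m^{r}f\bigr)=(4m)^{r}(\Psi f)^{(r)}. \]

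First I would substitute this identity into Definition \ref{def 3}. Since $g=g(\tau)$ supplies only the scalar factor $D_\tau^{s}g=g^{(s)}$, the bracket acquires the theta expansion $[f,g]^{\mathrm{hol}}_{k,m,l,\nu}=\sum_{\mu}h_\mu\theta_{m,\mu}$, whose coefficient vector I then compare with the vector-valued bracket of $\Psi f\in M_{k-1/2,\chi',\rho'}$ and of $g$, viewed as a one-dimensional vector-valued modular form of weight $l$ and multiplier $\chi_2$. Setting $k_1=k-\tfrac12$, $k_2=l$ and rewriting the binomials of Definition \ref{def 3} as $\binom{\nu+k-3/2}{s}=\Gamma(\nu+k_1)/(s!\,\Gamma(k_1+r))$ and $\binom{\nu+l-1}{r}=\Gamma(\nu+k_2)/(r!\,\Gamma(k_2+s))$, a term-by-term match against the coefficient $(-1)^{\nu-i}\binom{\nu}{i}\Gamma(k_1+\nu)\Gamma(k_2+\nu)/(\Gamma(k_1+i)\Gamma(k_2+\nu-i))$ in Definition \ref{def 1} (with $i=r$, $\nu-i=s$) shows that every factor cancels except an overall constant; in particular the sign matches since $(-4m)^{s}(4m)^{r}=(-1)^{s}(4m)^{\nu}$ and $(-1)^{\nu-i}=(-1)^{s}$. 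This yields the clean identity
\[ \Psi\bigl([f,g]^{\mathrm{hol}}_{k,m,l,\nu}\bigr)=\frac{(4m)^{\nu}}{\nu!}\,[\Psi f,g]_\nu. \]

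To conclude I would invoke Theorem \ref{thm 1}: as $\Psi f$ and $g$ are modular forms, $[\Psi f,g]_\nu$ is a vector-valued cusp form of weight $(k-\tfrac12)+l+2\nu=k+l+2\nu-\tfrac12$. Because the theta decomposition is a bijection that carries $J_{k+l+2\nu,m,\chi_1\chi_2}$ onto the corresponding space of vector-valued modular forms and restricts to an isomorphism of cuspidal subspaces, the displayed identity exhibits $H:=\sum_\mu h_\mu\mathbf{e}_\mu$ as the coefficient vector of $[f,g]^{\mathrm{hol}}_{k,m,l,\nu}$ and shows it to be a vector-valued cusp form of the matching type; hence $[f,g]^{\mathrm{hol}}_{k,m,l,\nu}=\Psi^{-1}(H)\in J^{cusp}_{k+l+2\nu,m,\chi_1\chi_2}$. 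The step I expect to be the main obstacle is not the combinatorics but the bookkeeping of multiplier systems and representations across the weight shift: one must verify, from the explicit formulas for $\chi'$ and $\rho'$ in \cite{CL}, that the theta data attached to $\chi_1\chi_2$ in weight $k+l+2\nu$ are precisely $\chi'\chi_2$ and $\rho'\otimes\mathbf{1}\cong\rho'$, so that the target of Theorem \ref{thm 1} coincides with the image of $\Psi$ in the correct weight. Granting this, holomorphy and the cusp condition transfer automatically through $\Psi$, and no independent analysis of the Fourier support of $[f,g]^{\mathrm{hol}}_{k,m,l,\nu}$ is required.
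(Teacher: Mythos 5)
Your proposal is correct, but it takes a genuinely different route from the paper. The paper splits the statement into two parts: it obtains membership in $J_{k+l+2\nu,m,\chi_1\chi_2}$ by asserting that Kimura's direct transformation argument in \cite{K} carries over to real weights and arbitrary multiplier systems, and it then proves cuspidality by an explicit Fourier-coefficient computation --- using $L_m\bigl(e^{2\pi i(n+\kappa_1)\tau}e^{2\pi irz}\bigr)=c\,(r^2-4m(n+\kappa_1))e^{2\pi i(n+\kappa_1)\tau}e^{2\pi irz}$ to write the coefficient $c(n+\kappa_1+\kappa_2,r)$ of the bracket as a sum of terms each containing the factor $\bigl(r^2-4m(n_1+\kappa_1)\bigr)^{i}(n_2+\kappa_2)^{\nu-i}$, and observing that on the locus $r^2-4m(n+\kappa_1+\kappa_2)=0$ the constraints force $r^2-4m(n_1+\kappa_1)=0=n_2+\kappa_2$, so every term vanishes since $\nu\geq 1$. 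You instead prove the compatibility identity $\Psi\bigl([f,g]^{\mathrm{hol}}_{k,m,l,\nu}\bigr)=\frac{(4m)^{\nu}}{\nu!}[\Psi(f),g]_{\nu}$ first and deduce the proposition from Theorem \ref{thm 1} via the isomorphism $\Psi$; that identity is precisely the paper's Theorem \ref{thm 7}, which the paper proves \emph{after} Proposition \ref{prop 1} by the same $L_m\theta_{m,\mu}=0$ computation you give, and whose proof does not depend on Proposition \ref{prop 1}, so there is no circularity. Your route buys a cleaner treatment of the modularity half (it inherits real weights and general multipliers from the vector-valued Theorem \ref{thm 1} rather than re-running Kimura's argument, which the paper only gestures at), and it makes Theorem \ref{thm 7} do double duty; the cost is exactly the bookkeeping you flag --- one must know that $\Psi$ is a bijection onto $M_{k+l+2\nu-\frac12,\chi_1\chi_2\chi',\rho'}$ matching cusp forms to cusp forms and that the multiplier/representation data line up --- whereas the paper's direct Fourier argument for cuspidality needs none of that and, as a by-product, records the explicit coefficients $c(n+\kappa_1+\kappa_2,r)$ of the bracket. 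Both arguments are sound; yours is the more structural, the paper's the more elementary.
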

\begin{proof}
    In \cite{K}, Kimura showed that 
    \begin{equation}\label{eq 5}
        [f,g]^{\mathrm{hol}}_{k,m,l,\nu}\in J_{k+l+2\nu,m,\chi_{1}\chi_2}, 
    \end{equation}
    when $k$ and $l$ are positive integers and both $\chi_1$ and $\chi_2$ are trivial. 
    By following the argument in the proof of  \cite[Proposition 1.3]{K}, one can also obtain \eqref{eq 5} for real numbers $k,l$ and arbitrary multiplier systems $\chi_1$ and $\chi_2$.
    Thus, to complete the proof of Proposition \ref{prop 1}, it remains to show that $[f,g]^{\mathrm{hol}}_{k,m,l,\nu}$ is a cusp form.

    Let 
    \[ f(\tau,z)=\sum_{\substack{n,r\in \ZZ \\ r^2-4m(n+\kappa_1)\leq 0}}c_{f}(n+\kappa_1,r) e^{2\pi i (n+\kappa_1)\tau}e^{2\pi i r z} \]
    and 
    \[ g(\tau)=\sum_{\substack{n\in \ZZ \\ n+\kappa_2\geq 0}} a_{g}(n+\kappa_2) e^{2\pi i (n+\kappa_2)\tau}, \]
    where $\kappa_1,\kappa_2\in [0,1)$ are real numbers. 
    Since
    \begin{equation*}
            L_m\left(e^{2\pi i (n+\kappa_1)\tau} e^{2\pi i r z} \right)=-4(r^2-4m(n+\kappa_1))e^{2\pi i (n+\kappa_1)\tau} e^{2\pi i r z},
    \end{equation*}
    we obtain
    \begin{equation*}
        \begin{aligned}
            [f,g]^{\mathrm{hol}}_{k,m,l,\nu}(\tau,z)&=\sum_{i=0}^{\nu}\binom{\nu+k-3/2}{\nu-i}\binom{\nu+l-1}{i}(-4m)^{\nu-i}L_m^{i}(f(\tau,z)) D^{\nu-i}_{\tau}(g(\tau))\\
            &=\sum_{i=0}^{\nu}\binom{\nu+k-3/2}{\nu-i}\binom{\nu+l-1}{i}(-4m)^{\nu-i}\\
            &\times\left(\sum_{\substack{n,r\in \ZZ \\ r^2-4m(n+\kappa_1)\leq 0}}(-4(r^2-4m(n+\kappa_1)))^{i}c_{f}(n+\kappa_1,r) e^{2\pi i (n+\kappa_1)\tau}e^{2\pi i r z} \right)\\
            &\times \left(\sum_{\substack{n\in \ZZ\\ n+\kappa_2\geq 0}} (n+\kappa_2)^{\nu-i} a_{g}(n+\kappa_2) e^{2\pi i (n+\kappa_2)\tau} \right).
        \end{aligned}
    \end{equation*}
    Then, $[f,g]^{\mathrm{hol}}_{k,m,l,\nu}(\tau,z)$ can be expressed as 
    \begin{equation*}
        [f,g]^{\mathrm{hol}}_{k,m,l,\nu}(\tau,z) = \sum_{\substack{r,n\in \ZZ\\ r^2-4m(n+\kappa_1+\kappa_2)\leq 0}} c(n+\kappa_1+\kappa_2,r) e^{2\pi i (n+\kappa_1+\kappa_2)\tau}e^{2\pi i r z},
    \end{equation*}
    where the coefficients $c(n+\kappa_1+\kappa_2,r)$ are given by
    \begin{equation*}
        \begin{aligned}
            c(n+\kappa_1+\kappa_2,r)&=\sum_{i=0}^{\nu} \binom{\nu+k-3/2}{\nu-i}\binom{\nu+l-1}{i}(-4m)^{\nu-i}\\
            &\times\sum_{\substack{n_1+n_2=n\\r^2-4m(n_1+\kappa_1)\leq 0\\ n_2+\kappa_2\geq 0}}(-4(r^2-4m(n_1+\kappa_1)))^{i}c_{f}(n_1+\kappa_1,r)(n_2+\kappa_2)^{\nu-i}a_{g}(n_2+\kappa_2).
        \end{aligned}
    \end{equation*}
    Assume that $r^2-4m(n+\kappa_1+\kappa_2)=0$ and let $n=n_1+n_2$ with $r^2-4m(n_1+\kappa_1)\leq 0$ and $n_2+\kappa_2\geq 0$.
    Then it follows that 
    \[ r^2-4m(n_1+\kappa_1)=0=n_2+\kappa_2. \]
    Thus, $c(n+\kappa_1+\kappa_2,r)=0$ whenever $r^2-4m(n+\kappa_1+\kappa_2)=0$. 
    Therefore, $[f,g]^{\mathrm{hol}}_{k,m,l,\nu}$ is a cusp form.
\end{proof}

The following theorem demonstrates the compatibility between the extended Rankin–Cohen bracket on Jacobi forms and the classical Rankin–Cohen bracket on vector-valued modular forms via the theta decomposition.

\begin{thm}\label{thm 7}
    Let $l$ be a real number.
    Assume that $f\in J_{k,m,\chi}$ and $g$ is a modular form of weight $\ell$ and multiplier system $\chi_1$ on $\mathrm{SL}_{2}(\mathbb{Z})$. 
    For a positive integer $\nu$, we have 
    \[  \frac{1}{(4m)^{\nu}}\cdot\Psi\left([f,g]^{\mathrm{hol}}_{k,m,l,\nu}\right) = \frac{1}{\nu !}\cdot[\Psi(f),g]_{\nu}. \]
\end{thm}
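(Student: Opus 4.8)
The plan is to reduce everything to the theta decomposition and to understand how the heat operator $L_m$ acts on the theta coefficients. Write $f(\tau,z)=\sum_{\mu=1}^{2m}f_\mu(\tau)\theta_{m,\mu}(\tau,z)$, so that $\Psi(f)=\sum_\mu f_\mu\mathbf{e}_\mu$ by the very definition of $\Psi$. The first and central step is to observe that $L_m$ annihilates each theta function: applying $L_m$ termwise to $\theta_{m,\mu}$, each Fourier mode $e^{2\pi i(r^2/4m)\tau}e^{2\pi i r z}$ is killed because $8\pi i m\cdot 2\pi i\frac{r^2}{4m}=(2\pi i r)^2$, i.e. the $\partial_\tau$-eigenvalue exactly matches the $\partial_z^2$-eigenvalue. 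Thus $L_m\theta_{m,\mu}=0$, the classical heat equation for Jacobi theta functions.

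Next I would feed this into a product rule. Since $L_m$ contains no mixed derivative and $f_\mu$ depends only on $\tau$, the second-order part of $L_m$ acts only on $\theta_{m,\mu}$ while the first-order part splits by the Leibniz rule; using $L_m\theta_{m,\mu}=0$ one finds $L_m(f_\mu\theta_{m,\mu})=4m\,D_\tau(f_\mu)\,\theta_{m,\mu}$ after simplifying the constant $-\frac{8\pi i m}{4\pi^2}\cdot 2\pi i=4m$. Iterating gives $L_m^r(f_\mu\theta_{m,\mu})=(4m)^r f_\mu^{(r)}\theta_{m,\mu}$, hence $L_m^r(f)=(4m)^r\sum_\mu f_\mu^{(r)}\theta_{m,\mu}$. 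Substituting into Definition \ref{def 3} and collecting the coefficient of $\theta_{m,\mu}$, the factor $(-4m)^s(4m)^r$ with $r+s=\nu$ becomes $(-1)^s(4m)^\nu$; applying $\Psi$, which reads off these coefficients, yields
\[
\frac{1}{(4m)^\nu}\Psi\!\left([f,g]^{\mathrm{hol}}_{k,m,l,\nu}\right)=\sum_{\mu}\left(\sum_{r+s=\nu}(-1)^s\binom{\nu+k-3/2}{s}\binom{\nu+l-1}{r}f_\mu^{(r)}g^{(s)}\right)\mathbf{e}_\mu.
\]

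Finally I would compare this with $\frac{1}{\nu !}[\Psi(f),g]_\nu$, where $\Psi(f)$ has weight $k_1=k-\tfrac12$ and $g$ has weight $k_2=l$ (a scalar form, so $\rho_1\otimes\mathbf{1}\cong\rho_1$ and $\mathbf{e}_\mu\otimes 1$ is identified with $\mathbf{e}_\mu$). Expanding Definition \ref{def 1} with $i=r$ and $\nu-i=s$, matching the two expressions coefficientwise reduces to the single combinatorial identity
\[
\binom{\nu+k-3/2}{s}\binom{\nu+l-1}{r}=\frac{1}{r!\,s!}\cdot\frac{\Gamma(k_1+\nu)\Gamma(k_2+\nu)}{\Gamma(k_1+r)\Gamma(k_2+s)}=\frac{1}{\nu !}\binom{\nu}{r}\frac{\Gamma(k_1+\nu)\Gamma(k_2+\nu)}{\Gamma(k_1+r)\Gamma(k_2+s)},
\]
which follows from writing each binomial coefficient as a rising factorial and using $\Gamma(k_1+\nu)/\Gamma(k_1+r)=(k_1+r)_s$ and $\Gamma(k_2+\nu)/\Gamma(k_2+s)=(k_2+s)_r$ together with $k-3/2=k_1-1$. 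The heart of the argument is the heat-operator computation of the first two paragraphs; the remaining obstacle, and the step most prone to bookkeeping slips, is tracking the constants and indices so that the Gamma-quotient identity comes out exactly, with no stray factor of $\nu !$ or sign.
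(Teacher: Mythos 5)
Your proposal is correct and follows essentially the same route as the paper: the heat operator annihilates each $\theta_{m,\mu}$, so $L_m^r(f)=(4m)^r\sum_\mu D_\tau^r(f_\mu)\,\theta_{m,\mu}$, and the comparison then reduces to exactly the Gamma-quotient/binomial identity you state (which the paper uses implicitly when rewriting $[\Psi(f),g]_\nu$ with weight $k-\tfrac12$). No gaps.
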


\begin{proof}
    Note that for any integer $a$, we have 
    \begin{equation*}
        \begin{aligned}
            L_m\left(e^{\frac{\pi i a^2\tau}{2m}}e^{2\pi i a z}\right)&=-\frac{1}{4\pi^2}\left(8\pi i m \frac{\partial}{\partial \tau} - \frac{\partial^2}{\partial z^2}\right)\left(e^{\frac{\pi i a^2\tau}{2m}}e^{2\pi i a z}\right)=0.
        \end{aligned}
    \end{equation*}
    Thus, we have 
    \begin{equation}\label{eq 2}
    \begin{aligned}
        L_m\left(f\right)&=L_m\left(\sum_{\mu=1}^{2m} f_{\mu}(\tau)\cdot \theta_{m,\mu}(\tau,z)\right)\\
        &=\sum_{\mu=1}^{2m} L_m\left(f_{\mu}(\tau)\cdot \theta_{m,\mu}(\tau,z) \right)\\
        &=\sum_{\mu=1}^{2m} L_m\left(f_{\mu}(\tau)\right)\cdot \theta_{m,\mu}(\tau,z)\\
        &=4m \sum_{\mu=1}^{2m} D_{\tau}(f_{\mu}(\tau))\cdot \theta_{m,\mu}(\tau,z).
    \end{aligned}
    \end{equation}
    By \eqref{eq 2}, for any non-negative integer $r$, we have 
    \[ L_m^{r}\left(f\right) = (4m)^{r}\sum_{\mu=1}^{2m} D_{\tau}^{r}(f_{\mu}(\tau))\cdot \theta_{m,\mu}(\tau,z).  \]
    Then, we get 
    \begin{equation*}
        \begin{aligned}
            &[f,g]^{\mathrm{hol}}_{k,m,l,\nu}\\
            &=\sum_{r+s=\nu}\binom{\nu+k-3/2}{s}\binom{\nu+l-1}{r} (-4m)^{s}\sum_{\mu=1}^{2m} (4m)^{r} D_{\tau}^{r}(f_{\mu}(\tau))\cdot \theta_{m,\mu}(\tau,z) D_{\tau}^{s}(g(\tau))\\
            &=\sum_{\mu=1}^{2m}\left((4m)^{\nu}\sum_{r+s=\nu}(-1)^{s}\binom{\nu+k-3/2}{s}\binom{\nu+l-1}{r} D_{\tau}^{r}(f_{\mu}(\tau))D_{\tau}^{s}(g(\tau))\right)\cdot \theta_{m,\mu}(\tau,z).
        \end{aligned}
    \end{equation*}
    Thus, we obtain 
    \begin{equation*}
        \begin{aligned}
            &\Psi\left([f,g]^{\mathrm{hol}}_{k,m,l,\nu}\right)\\
            &= (4m)^{\nu}\cdot\sum_{\mu=1}^{2m}\left(\sum_{r+s=\nu}(-1)^{s}\binom{\nu+k-3/2}{s}\binom{\nu+l-1}{r} D_{\tau}^{r}(f_{\mu}(\tau))D_{\tau}^{s}(g(\tau))\right)\mathbf{e_{\mu}}.
        \end{aligned}
    \end{equation*}

    Note that the weight of $\Psi(f)$ is $k-\frac{1}{2}$. 
    Hence, we have
    \begin{equation*}
        \begin{aligned}
            [\Psi(f),g]_{\nu}&=\sum_{i=0}^{\nu} (-1)^{\nu-i} \binom{\nu}{i}\frac{\Gamma(k-\frac{1}{2}+\nu)\Gamma(l+\nu)}{\Gamma(k-\frac{1}{2}+i)\Gamma(l+\nu-i)}\Psi(f)^{(i)}(\tau)\otimes g^{(\nu-i)}(\tau)\\
            &=\nu!\sum_{r+s=\nu} (-1)^{s} \binom{\nu+k-3/2}{s}\binom{\nu+l-1}{r} \sum_{\mu=1}^{2m} D_{\tau}^{r}(f_{\mu}(\tau))D_{\tau}^{s}(g(\tau)) \mathbf{e_{\mu}}\\
            &=\nu!\cdot\sum_{\mu=1}^{2m}\left(\sum_{r+s=\nu} (-1)^{s} \binom{\nu+k-3/2}{s}\binom{\nu+l-1}{r}  D_{\tau}^{r}(f_{\mu}(\tau))D_{\tau}^{s}(g(\tau))\right)\mathbf{e_{\mu}}.
        \end{aligned}
    \end{equation*}
    Therefore, we completed the proof of Theorem \ref{thm 7}.
\end{proof}

Using the compatibility established in Theorem \ref{thm 7}, results from the theory of vector-valued modular forms can be translated into the setting of Jacobi forms.
As a consequence, Theorems \ref{thm 3} and \ref{thm 4} naturally induce analogous statements for Jacobi forms.
To formulate the corresponding results of Jacobi forms, we begin by recalling the definition of the Poincaré series for modular forms.
 For a non-negative integer $s$, let 
    \[ \mathbb{P}(\tau;k_2,\chi_2,s):=\frac{1}{2} \sum_{M\in \mathrm{SL}_{2}(\mathbb{Z})/\langle T \rangle } \chi_2^{-1}(M)(c\tau+d)^{-k_2}\mathrm{exp}(2\pi i(s+\kappa)M\tau), \]
    where $M=\sm a & b\\
    c & d \esm$ and $\chi_2(T)=e^{2\pi i \kappa}$ with $\kappa\in [0,1)$.
The following theorem gives the Jacobi form analogue of Theorem \ref{thm 3}.
    
\begin{thm}\label{thm 9}
    For each $i\in \{1,2\}$, let $k_i$ be a real number with $k_i>2$ and $\chi_i$ be a multiplier system of weight $k_i$ on $\mathrm{SL}_{2}(\mathbb{Z})$.
    Let $m$ and $\nu$ be positive integers. 
    Let $\kappa_i\in [0,1)$ be a real number such that $\chi_{i}(T)=e^{2\pi i \kappa_i}$.
    Assume that 
    \[ f(\tau,z)=\sum_{\substack{n,r\in \mathbb{Z}\\r^2-4m(n+\kappa_1+\kappa_2)\leq 0}} c_{f}(n+\kappa_1+\kappa_2,r) e^{2\pi i (n+\kappa_1+\kappa_2)\tau}e^{2\pi i r z}\in J_{k_1+k_2+2\nu,m,\chi_1\chi_2} \]
    and that
    \[ g(\tau,z)=\sum_{\substack{n,r\in \mathbb{Z}\\r^2-4m(n+\kappa_1)\leq 0}} c_{g}(n+\kappa_1,r) e^{2\pi i (n+\kappa_1)\tau}e^{2\pi i r z}\in J_{k_1,m,\chi_1}. \]
    Then, we have 
    \begin{equation*}
        \begin{aligned}
            &\langle f, [g,\mathbb{P}(\tau;k_2,\chi_2,s)]_{k_1,m,k_2,\nu}^{\mathrm{hol}} \rangle\\
            &=\frac{(4m)^{\nu}\Gamma(k_1+k_2+2\nu-\frac{3}{2})}{\sqrt{2m}\cdot \nu!\cdot (4\pi)^{k_1+k_2+2\nu-1}}\sum_{j=1}^{2m}\sum_{u=0}^{\nu} (-(s+\kappa_{2}))^{u}\binom{\nu}{u}\\
            &\times\frac{\Gamma(k_1+\nu-\frac{1}{2})\Gamma(k_2+\nu)}{\Gamma(k_1+\nu-u-\frac{1}{2})\Gamma(k_2+u)}\sum_{n\in \mathbb{Z}} \left(n+\kappa_{1}-\frac{j^2}{4m}\right)^{\nu-u}\frac{c_{f}(s+n+\kappa_{1}+\kappa_{2},j)\overline{c_g(n+\kappa_{1},j)}}{\left(s+n+\kappa_{1}+\kappa_{2}-\frac{j^2}{4m}\right)^{k_1+k_2+2\nu-1}}.
        \end{aligned}
    \end{equation*}
\end{thm}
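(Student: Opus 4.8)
The plan is to reduce the assertion to the vector-valued computation of Theorem \ref{thm 3} through the isomorphism $\Psi$ of Theorem \ref{thm 5}. Two bridges do the work: the compatibility of Petersson products, $\langle \phi,\psi\rangle=\frac{1}{\sqrt{2m}}\langle \Psi(\phi),\Psi(\psi)\rangle$, and the intertwining identity of Theorem \ref{thm 7}, which gives $\Psi\big([g,\mathbb{P}]^{\mathrm{hol}}_{k_1,m,k_2,\nu}\big)=\frac{(4m)^\nu}{\nu!}[\Psi(g),\mathbb{P}]_\nu$. Since $\mathbb{P}(\tau;k_2,\chi_2,s)$ is an ordinary (scalar) modular form and not a Jacobi form, it is untouched by $\Psi$ and enters the vector-valued bracket as a one-dimensional factor; thus $[\Psi(g),\mathbb{P}]_\nu$ is precisely a Rankin--Cohen bracket of the shape handled by Theorem \ref{thm 3}, with the second representation trivial.

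First I would note that $h:=[g,\mathbb{P}]^{\mathrm{hol}}_{k_1,m,k_2,\nu}$ is a Jacobi cusp form by Proposition \ref{prop 1}, so $\langle f,h\rangle$ converges even though $f$ need not be cuspidal; the unfolding that proves the compatibility formula for two cusp forms applies verbatim, as only one factor must decay, yielding $\langle f,h\rangle=\frac{1}{\sqrt{2m}}\langle\Psi(f),\Psi(h)\rangle$. Inserting Theorem \ref{thm 7} then gives
\[ \langle f,h\rangle=\frac{(4m)^\nu}{\sqrt{2m}\,\nu!}\,\langle \Psi(f),[\Psi(g),\mathbb{P}]_\nu\rangle, \]
and I would apply Theorem \ref{thm 3} to the right-hand side, carrying the prefactor $\frac{(4m)^\nu}{\sqrt{2m}\,\nu!}$ along to the end.

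The core of the argument is matching the data of Theorem \ref{thm 3} to the Jacobi data. Under $\Psi$ a form in $J_{\bullet,m,\bullet}$ becomes a vector-valued form for the $2m$-dimensional unitary representation $\rho'$ attached to the index $m$, so $d_1=2m$; since $\rho'$ is unitary, $(\rho')^\vee=\rho'$, which is exactly what lets $\Psi(f)$ pair correctly against $[\Psi(g),\mathbb{P}]_\nu$. The weights shift by $\tfrac12$: $\Psi(g)$ has weight $k_1-\tfrac12$ and $\Psi(f)$ has weight $k_1+k_2+2\nu-\tfrac12$, while $\mathbb{P}$ retains weight $k_2$, and this is what turns the quotient $\frac{\Gamma(k_1+\nu)}{\Gamma(k_1+\nu-u)}$ of Theorem \ref{thm 3} into $\frac{\Gamma(k_1+\nu-\frac12)}{\Gamma(k_1+\nu-u-\frac12)}$ and the outer factor into $\Gamma(k_1+k_2+2\nu-\tfrac32)/(4\pi)^{\cdots}$. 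The rest is the theta bookkeeping: the $j$-th component of $\Psi(f)$ (resp. $\Psi(g)$) has Fourier exponents in $\kappa_1+\kappa_2-\frac{j^2}{4m}+\ZZ$ (resp. $\kappa_1-\frac{j^2}{4m}+\ZZ$) with coefficient $c_f(\,\cdot\,,j)$ (resp. $c_g(\,\cdot\,,j)$), because $c(\,\cdot\,,r)$ depends only on $4m(\cdot)-r^2$ and on $r\bmod 2m$. Hence the substitutions $\kappa_{1,j}\leftrightarrow\kappa_1-\frac{j^2}{4m}$, $\kappa_{2,r}\leftrightarrow\kappa_2$, $b_j\leftrightarrow c_g(\,\cdot\,,j)$, $a_{j,r}\leftrightarrow c_f(\,\cdot\,,j)$ transcribe Theorem \ref{thm 3} into the claimed expression.

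The step I expect to be most delicate is this index translation together with the half-integer shifts. One must check that the theta index $j$ in Theorem \ref{thm 9} is the coordinate index of $\rho'$, that the argument $s+n+\kappa_1+\kappa_2$ of $c_f$ is the Jacobi index recovered from the theta-component exponent $s+n+\kappa_1+\kappa_2-\frac{j^2}{4m}$ by adding back $\frac{j^2}{4m}$, and that the half-integral $\Gamma$- and $(4\pi)$-factors coming from Theorem \ref{thm 2} at weight $k_1+k_2+2\nu-\tfrac12$ are assembled consistently with the numerator factor $\Gamma(k_1+k_2+2\nu-\tfrac32)$; keeping the exponent in Theorem \ref{thm 2} aligned with this shifted weight is the only genuine computational care the proof requires.
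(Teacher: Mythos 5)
Your proposal is correct and follows essentially the same route as the paper's proof: reduce to Theorem \ref{thm 3} via the isometry-up-to-$\frac{1}{\sqrt{2m}}$ of Petersson products under $\Psi$ and the intertwining identity of Theorem \ref{thm 7}, then transcribe the data with $d_1=2m$, a trivial one-dimensional second representation, weight $k_1-\tfrac12$ for $\Psi(g)$, and $\kappa_{1,j}=\kappa_1-\frac{j^2}{4m}$ read off from the theta components. Your explicit remarks on convergence of the pairing and on $(\rho')^{\vee}=\rho'$ for unitary $\rho'$ are points the paper leaves implicit, but they do not change the argument.
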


\begin{proof}
    By applying Theorem \ref{thm 7}, we obtain
    \begin{equation*}
        \begin{aligned}
            \langle f, [g,\mathbb{P}(\tau;k_2,\chi_2,s)]_{k_1,m,k_2,\nu}^{\mathrm{hol}} \rangle&=\frac{1}{\sqrt{2m}}\langle \Psi(f), \Psi([g,\mathbb{P}(\tau;k_2,\chi_2,s)]^{\mathrm{hol}}_{k_1,m,k_2,\nu}) \rangle\\
            &=\frac{(4m)^{\nu}}{\nu ! \cdot \sqrt{2m}}\langle \Psi(f), [\Psi(g),\mathbb{P}(\tau;k_2,\chi_2,s)]_{\nu} \rangle.
        \end{aligned}
    \end{equation*}
    Note that $\Psi(g)\in M_{k_1-\frac{1}{2},\chi_1\chi',\rho'}$ and that the $(j,j)$-entry of $\chi'(T)\rho'(T)$ is equal to $e^{-2\pi i \frac{j^2}{4m}}$.
    By the theta expansion, we have 
    \begin{equation*}
        \Psi(g)(\tau)=\sum_{\mu=1}^{2m} \left(\sum_{\substack{n\in \ZZ\\ \mu^2-4m(n+\kappa_1)\leq 0}} c_g(n+\kappa_1,\mu) \exp\left(2\pi i\tau \left(n+\kappa_1-\frac{\mu^2}{4m} \right)\right)\right) \mathbf{e_{\mu}}.
    \end{equation*}
    Similarly, we obtain $\Psi(f)\in M_{k_1+k_2-\frac{1}{2},\chi_1\chi_2\chi',\rho'}$ and 
    \begin{equation*}
        \Psi(f)(\tau)=\sum_{\mu=1}^{2m} \left(\sum_{\substack{n\in \ZZ\\ \mu^2-4m(n+\kappa_1+\kappa_2)\leq 0}} c_f(n+\kappa_1+\kappa_2,\mu) \exp\left(2\pi i\tau \left(n+\kappa_1+\kappa_2-\frac{\mu^2}{4m} \right)\right)\right) \mathbf{e_{\mu}}.
    \end{equation*}
    Using Theorem \ref{thm 3}, we conclude the proof of  Theorem \ref{thm 9}.
\end{proof}

Similarly, using the correspondence between Jacobi-forms and vector-valued modular forms, we obtain an explicit description of the adjoint map of the linear map induced by the extended Rankin-Cohen bracket.
The following theorem provides the Jacobi-form analogue of Theorem \ref{thm 4}.

\begin{thm}\label{prop 2}
For each $i\in \{1,2\}$, let $k_i$, $\chi_i$ and $\kappa_i$ be as in Theorem \ref{thm 9}.
Let $m$ and $\nu$ be positive integers. 
Assume that $g(\tau)=\sum_{n\in \mathbb{Z}} b(n+\kappa_1)e^{2\pi i (n+\kappa_1)\tau}$ is a modular form of weight $k_1$ and multiplier system $\chi_1$. 
Let $T^{\mathrm{hol}}_{g,\nu}: J^{cusp}_{k_2,m,\chi_2}\to J^{cusp}_{k_1+k_2+2\nu,m,\chi_1\chi_2}$ be defined by $T^{\mathrm{hol}}_{g,\nu}(f):=[f,g]^{\mathrm{hol}}_{k_2,m,k_1,\nu}$. 
If 
\[h(\tau,z)=\sum_{\substack{n,r\in \ZZ \\ r^2-4m(n+\kappa_1+\kappa_2)\leq 0}} c_h(n+\kappa_1+\kappa_2,r)e^{2\pi i (n+\kappa_1+\kappa_2)\tau}e^{2\pi i r z}\in J^{cusp}_{k_1+k_2+2\nu,m,\chi_1\chi_2},\]
then   
\[ \left(T^{\mathrm{hol},*}_{g,\nu}(h)\right)(\tau,z)=\sum_{\substack{n,r\in \ZZ\\ r^2-4m(n+\kappa_2)\leq 0}} c(n+\kappa_2,r)e^{2\pi i (n+\kappa_2)\tau}e^{2\pi i r z},\]
where $\mu\in\{1,\dots,2m\}$ with $\mu\equiv r\pmod{2m}$ and
\begin{equation*}
        \begin{aligned}
            &c(n+\kappa_2,r)\\
            &=\frac{(-4m)^{\nu}}{\nu!}(4\pi(n+\kappa_{2}-\frac{\mu^2}{4m}))^{k_2-\frac{3}{2}}\frac{\Gamma(k_1+k_2+2\nu-\frac{3}{2})}{(4\pi)^{k_1+k_2+2\nu-\frac{3}{2}}}\sum_{u=0}^{\nu} (-(n+\kappa_{2}-\frac{\mu^2}{4m}))^{u}\binom{\nu}{u}\\
            &\times\frac{\Gamma(k_1+\nu)\Gamma(k_2+\nu-\frac{1}{2})}{\Gamma(k_1+\nu-u)\Gamma(k_2+u-\frac{1}{2})\Gamma(k_2-\frac{3}{2})}\sum_{t\in \mathbb{Z}} (t+\kappa_{1})^{\nu-u}\frac{c_h(n+t+\kappa_{1}+\kappa_{2},\mu)\overline{b(t+\kappa_{1})}}{(n+t+\kappa_{1}+\kappa_{2}-\frac{\mu^2}{4m})^{k_1+k_2+2\nu-\frac{3}{2}}}.
        \end{aligned}
    \end{equation*}
Here, $T^{\mathrm{hol},*}_{g,\nu}$ denotes the adjoint map of $T^{\mathrm{hol}}_{g,\nu}$.
\end{thm}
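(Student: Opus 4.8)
The plan is to transfer the entire problem to the vector-valued setting through the isomorphism $\Psi$ and then invoke Theorem~\ref{thm 4}. First I would introduce the auxiliary vector-valued operator $\tilde{T}_{g,\nu}(F):=[F,g]_{\nu}$ acting on $S_{k_2-\frac12,\chi_2\chi',\rho'}$. This is exactly the operator of Theorem~\ref{thm 4} under the identifications: the variable weight is $k_2-\frac12$ (the weight of $\Psi(f)$), the fixed form $g$ is scalar of weight $k_1$ so that the corresponding $\rho_1$ is trivial ($d_1=1$), and $\rho_2=\rho'$ is the Weil-type representation. Since $\rho'$ is unitary, the adjoint $\tilde{T}^{*}_{g,\nu}$ exists as in the discussion preceding Theorem~\ref{thm 4}. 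By Theorem~\ref{thm 7} we have the operator identity $\Psi\circ T^{\mathrm{hol}}_{g,\nu}=\frac{(4m)^{\nu}}{\nu!}\,\tilde{T}_{g,\nu}\circ\Psi$.

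Next I would use the compatibility of the two Petersson products, $\langle\cdot,\cdot\rangle_{J}=\frac{1}{\sqrt{2m}}\langle\Psi(\cdot),\Psi(\cdot)\rangle$. For any $f\in J^{cusp}_{k_2,m,\chi_2}$ one computes
\[
\langle T^{\mathrm{hol}}_{g,\nu}(f),h\rangle_{J}=\frac{(4m)^{\nu}}{\nu!\sqrt{2m}}\,\langle \tilde{T}_{g,\nu}(\Psi(f)),\Psi(h)\rangle=\frac{(4m)^{\nu}}{\nu!\sqrt{2m}}\,\langle \Psi(f),\tilde{T}^{*}_{g,\nu}(\Psi(h))\rangle,
\]
whereas the defining property of $T^{\mathrm{hol},*}_{g,\nu}$ gives $\langle f,T^{\mathrm{hol},*}_{g,\nu}(h)\rangle_{J}=\frac{1}{\sqrt{2m}}\langle\Psi(f),\Psi(T^{\mathrm{hol},*}_{g,\nu}(h))\rangle$. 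Since $\Psi$ is surjective, equating these for all $f$ yields the key identity $\Psi(T^{\mathrm{hol},*}_{g,\nu}(h))=\frac{(4m)^{\nu}}{\nu!}\,\tilde{T}^{*}_{g,\nu}(\Psi(h))$; this simultaneously establishes the existence of $T^{\mathrm{hol},*}_{g,\nu}$ and reduces its determination to that of $\tilde{T}^{*}_{g,\nu}$.

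Then I would apply Theorem~\ref{thm 4} to $\tilde{T}^{*}_{g,\nu}(\Psi(h))$ with the substitution $k_2\mapsto k_2-\frac12$ and $d_1=1$, so that the $j$-sum collapses and $b_j,\kappa_{1,j}$ become the scalar data $b,\kappa_1$ of $g$, while the $\rho'$-parameters are $\kappa_{2,\mu}=\kappa_2-\frac{\mu^2}{4m}$, read off from the fact (already used in the proof of Theorem~\ref{thm 9}) that the $(\mu,\mu)$-entry of $\chi'(T)\rho'(T)$ is $e^{-2\pi i\mu^2/4m}$. Under these substitutions the exponent $k_2-1$ becomes $k_2-\frac32$, the exponent $k_1+k_2+2\nu-1$ becomes $k_1+k_2+2\nu-\frac32$, and $\Gamma(k_2-1)$ becomes $\Gamma(k_2-\frac32)$, matching the stated formula. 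Finally I would reassemble the resulting vector-valued form into a Jacobi cusp form via $\Psi^{-1}$, i.e.\ the theta decomposition of Theorem~\ref{thm 5}: the $\mu$-th component, with Fourier coefficient at the exponent $n+\kappa_2-\frac{\mu^2}{4m}$, becomes the Jacobi coefficient $c(n+\kappa_2,r)$ with $r\equiv\mu\pmod{2m}$. The constant $(-1)^{\nu}$ from Theorem~\ref{thm 4} combines with $\frac{(4m)^{\nu}}{\nu!}$ to produce the prefactor $\frac{(-4m)^{\nu}}{\nu!}$.

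The main obstacle I anticipate is the careful bookkeeping in this final reassembly. One must use the standard fact that the Fourier coefficients of a Jacobi form depend only on the discriminant $4m(n+\kappa)-r^2$ and on $r\bmod 2m$ in order to see that writing the formula with $\frac{\mu^2}{4m}$ (rather than $\frac{r^2}{4m}$) and a relabelled integer index is consistent, and to confirm that the claimed Fourier support $\{(n,r):r^2-4m(n+\kappa_2)\le 0\}$ is exactly what $\Psi^{-1}$ produces; this is where the vector-valued--to--Jacobi translation is genuinely delicate. By contrast, the weight-shift substitution in the Gamma and power factors is routine once the parameter correspondence is pinned down.
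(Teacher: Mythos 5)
Your proposal is correct and takes essentially the same route as the paper: it establishes $\Psi\bigl(T^{\mathrm{hol},*}_{g,\nu}(h)\bigr)=\frac{(4m)^{\nu}}{\nu!}T^{*}_{g,\nu}(\Psi(h))$ via the chain of Petersson identities coming from the compatibility $\langle f,g\rangle=\frac{1}{\sqrt{2m}}\langle\Psi(f),\Psi(g)\rangle$ together with Theorem \ref{thm 7}, and then reads off the coefficients from Theorem \ref{thm 4} with the weight shift $k_2\mapsto k_2-\frac12$ and $\kappa_{2,\mu}=\kappa_2-\frac{\mu^2}{4m}$. Your write-up is in fact slightly more explicit than the paper's about the parameter dictionary and the final theta-reassembly, but the argument is the same.
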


\begin{proof}
    Note that by the theta expansion, we have 
    \begin{equation*}
        \Psi(h)(\tau)=\sum_{\mu=1}^{2m} \left(\sum_{\substack{n\in \ZZ\\ \mu^2-4m(n+\kappa_1+\kappa_2)\leq 0}} c_h(n+\kappa_1+\kappa_2,\mu) \exp\left(2\pi i\tau \left(n+\kappa_1+\kappa_2-\frac{\mu^2}{4m} \right)\right)\right) \mathbf{e_{\mu}}.
    \end{equation*}
    To prove Proposition \ref{prop 2}, it is enough to show that
    \[ T^{\mathrm{hol},*}_{g,\nu}(h)=\frac{(4m)^{\nu}}{\nu!}\Psi^{-1}(T^{*}_{g,\nu}(\Psi(h))).\]
    Since $\Psi$ is an isomorphism, we have 
    \begin{equation*}
        \begin{aligned}
            \langle \Psi(T^{\mathrm{hol},*}_{g,\nu}(h)), \Psi(f) \rangle &= \sqrt{2m}\langle  T^{\mathrm{hol},*}_{g,\nu}(h),f \rangle \\
            &=\sqrt{2m} \langle h, T^{\mathrm{hol}}_{g,\nu}(f) \rangle\\
            &= \sqrt{2m} \langle h, [f,g]^{\mathrm{hol}}_{k_2,m,k_1,\nu} \rangle\\
            &=\langle \Psi(h), \Psi([f,g]^{\mathrm{hol}}_{k_2,m,k_1,\nu}) \rangle\\
            &=\langle \Psi(h), \frac{(4m)^v}{v!} [\Psi(f),g]_{\nu} \rangle \\
            &= \frac{(4m)^{\nu}}{\nu !}\langle \Psi(h), T_{g,\nu}(\Psi(f)) \rangle \\
            &=\langle \frac{(4m)^{\nu}}{\nu!}T^{*}_{g,\nu}(\Psi(h)), \Psi(f)  \rangle.
        \end{aligned}
    \end{equation*}
\end{proof}

\section{Case of Skew-holomorphic Jacobi forms}\label{s : skew Jacobi}

In this section, we review the notion of skew-holomorphic Jacobi forms of real weight on $\mathrm{SL}_{2}(\mathbb{Z})$
and establish analogues of the results in Section \ref{s : Jacobi forms} in the setting of skew-holomorphic Jacobi forms.
Similar to the case of Jacobi forms, we introduce an isomorphism between the space of skew-holomorphic Jacobi forms and that of vector-valued modular forms via the theta expansion. 
This isomorphism allows us to establish the compatibility of the Petersson inner product and the Rankin–Cohen bracket between vector-valued modular forms and skew-holomorphic Jacobi forms.
From this compatibility, analogous results for Jacobi forms, as presented in Section \ref{s : Jacobi forms}, can be obtained through parallel arguments. For the reader’s convenience, we state the corresponding theorems explicitly without proof.
We follow the notation introduced in Section \ref{s : Jacobi forms}. 

For a function $f:\mathbb{H}\times \mathbb{C}\to \mathbb{C}$ and $M=[\sm a & b\\
c & d \esm , (\lambda,\mu)]\in \Gamma^{J}$, we define $f|^{sk}_{k,m,\chi}M$ by
\begin{equation*}
    \begin{aligned}
    \left(f|^{sk}_{k,m,\chi}M\right)(\tau,z)
    :=&\chi^{-1}\left(\left(\begin{smallmatrix}
    a & b\\
    c & d
    \end{smallmatrix}\right)\right)f\left(\frac{a\tau+b}{c\tau+d}, \frac{z+\lambda \tau+\mu}{c\tau+d}\right)\\ &\times(c\overline{\tau}+d)^{1-k}|c\tau+d|^{-1}\exp\left(2\pi i m\left(-\frac{c(z+\lambda\tau+\mu)^2}{c\tau+d} +\lambda^2\tau+2\lambda z\right)\right).
    \end{aligned}
\end{equation*}

\begin{dfn}\label{def : def of skew-Jacobi form}
    A function $f:\mathbb{H}\times \mathbb{C}\to \mathbb{C}$ is called a skew-holomorphic Jacobi form of weight $k$, index $m$, and multiplier system $\chi$ on $\mathrm{SL}_{2}(\mathbb{Z})$ if it is real analytic in $\mathbb{H}$ and holomorphic in $\mathbb{C}$, and $f$ satisfies the following conditions:
    \begin{enumerate}
        \item $\left(f|^{sk}_{k,m,\chi}[\gamma,X]\right)(\tau,z)=f(\tau,z)$ for every $[\gamma,X]\in \Gamma^{J}$, and 
        \item there is a real number $\kappa\in [0,1)$ such that 
        \[ f(\tau,z)= \sum_{\substack{n,r\in \ZZ \\ r^2-4m(n+\kappa)\geq 0}}d(n+\kappa,r)\exp\left(\frac{-\pi y}{m}\left(r^2-4m(n+\kappa)\right)\right)e^{2\pi i(n+\kappa)\tau}e^{2\pi i rz},\]
        where $\tau=x+iy$.
    \end{enumerate}
\end{dfn}

A skew-holomorphic Jacobi form $f$ is called a skew-holomorphic Jacobi cusp form if $d(n+\kappa,r)=0$ for all $(n,r)\in \mathbb{Z}^2$ with $r^2-4m(n+\kappa)=0$. 
Let $J^{sk}_{k,m,\chi}$ (resp. $J^{sk,cusp}_{k,m,\chi}$) denote the space of skew-holomorphic Jacobi forms (resp. skew-holomorphic Jacobi cusp forms) of weight $k$, index $m$, and multiplier system $\chi$ on $\SL_2(\mathbb{Z})$.
Now, we introduce the definition of the Petersson inner product of skew-holomorphic Jacobi cusp forms. 

\begin{dfn}\label{def : def of Petersson inner product Skew - Jacobi}
Let $k$ be a real number and $m$ be a positive integer. 
Let $\chi$ be a multiplier system of weight $k$ on $\mathrm{SL}_{2}(\mathbb{Z})$. 
Assume that $f$ and $g$ are in $J^{sk,cusp}_{k,m,\chi}$. 
The Petersson inner product $\langle f, g \rangle$ of $f$ and $g$ is defined by 
\[ \langle f , g \rangle := \int_{\Gamma^{J}\backslash \mathbb{H}\times \mathbb{C}} f(\tau,z)\overline{g(\tau,z)} e^{-\frac{4\pi m v^2}{y}}y^{k} \frac{dxdydudv}{y^3},  \]
where $\tau=x+iy\in \mathbb{H}$ and $z=u+iv\in \mathbb{C}$. 
\end{dfn}

Similar to Jacobi forms, skew-holomorphic Jacobi forms correspond to vector-valued modular forms via the theta expansion. 
The following theorem describes the correspondence between skew-holomorphic Jacobi forms and vector-valued modular forms. 

\begin{thm}\label{thm 6}\cite[Section 6]{BR}
    Assume that $g\in J^{sk}_{k,m,\chi}$. 
    Then, there is a unique tuple of functions $\{g_{\mu}\}_{\mu\in \{1,\dots, 2m\}}$ such that $f_{\mu} : \mathbb{H}\to \mathbb{C}$ for each $\mu\in \{1,\dots, 2m\}$ and that 
    \[ g(\tau,z)=\sum_{\mu=1}^{2m} g_{\mu}(\tau) \cdot \theta_{m,\mu}(\tau,z). \]
    Moreover, $G(\tau):=\sum_{\mu=1}^{2m} \overline{g_{\mu}(\tau)} \mathbf{e_{\mu}} : \mathbb{H}\to \mathbb{C}^{2m}$ is a vector-valued modular form of weight $k-\frac{1}{2}$ and multiplier system $\overline{\chi'}$ with respect to $\overline{\rho'}$ on $\mathrm{SL}_{2}(\mathbb{Z})$. 
\end{thm}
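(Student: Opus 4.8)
The plan is to mirror the structure of the theta decomposition argument used for holomorphic Jacobi forms in Theorem \ref{thm 5}, but carefully track how the skew-holomorphic slash operator $|^{sk}_{k,m,\chi}$ transforms under $\mathrm{SL}_2(\mathbb{Z})$ and how complex conjugation interacts with it. First I would establish the \emph{existence and uniqueness} of the tuple $\{g_\mu\}$. Since the theta functions $\theta_{m,\mu}(\tau,z)$ are holomorphic in $z$ and, for fixed $\tau$, are linearly independent as functions of $z$ (they have disjoint supports in the $z$-Fourier expansion, being indexed by $r \equiv \mu \pmod{2m}$), the $z$-Fourier expansion of $g$ forces a unique decomposition. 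Concretely, from the Fourier expansion in Definition \ref{def : def of skew-Jacobi form}, collecting terms with $r \equiv \mu \pmod{2m}$ and using $r^2/(4m) = (n+\kappa) + (r^2-4m(n+\kappa))/(4m)$, I would read off each $g_\mu(\tau)$ as an explicit (non-holomorphic) Fourier series, whose exponential of the form $\exp(-\pi y(r^2-4m(n+\kappa))/m)e^{2\pi i (n+\kappa)\tau}$ combines with the $\theta$-phase to give a function that is real-analytic in $\tau$. This is the routine bookkeeping step.

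The substantive step is the \textbf{modular transformation law}. The key identity I would invoke is the transformation behavior of the theta vector $\Theta = (\theta_{m,\mu})_{\mu}$ under $\mathrm{SL}_2(\mathbb{Z})$: under $S$ and $T$ the $\theta_{m,\mu}$ transform via an explicit unitary Weil representation, and for the holomorphic case this produces exactly the $\chi'$ and $\rho'$ of Theorem \ref{thm 5}. The invariance of $g$ under $|^{sk}_{k,m,\chi}$ translates, after separating the $\theta$-transformation from the scalar factors, into a transformation law for the vector $(g_\mu)_\mu$. The crucial difference from the holomorphic case is the factor $(c\overline{\tau}+d)^{1-k}|c\tau+d|^{-1}$ in place of $(c\tau+d)^{-k}$. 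Writing $|c\tau+d|^{-1} = (c\tau+d)^{-1/2}(c\overline{\tau}+d)^{-1/2}$, the automorphy factor for $(g_\mu)_\mu$ becomes a product involving $(c\overline{\tau}+d)^{1/2-k}$ and a theta-automorphy factor that itself carries a $(c\tau+d)^{1/2}$. The plan is to show that after taking complex conjugation of $g_\mu$ — which is why the theorem forms $G(\tau) = \sum_\mu \overline{g_\mu(\tau)}\,\mathbf{e}_\mu$ rather than $\sum_\mu g_\mu(\tau)\,\mathbf{e}_\mu$ — the antiholomorphic factor $(c\overline{\tau}+d)^{1/2-k}$ becomes $(c\tau+d)^{1/2-k}$ conjugated, i.e. a genuine holomorphic automorphy factor of weight $k-1/2$, and the Weil-representation matrix gets replaced by its complex conjugate $\overline{\rho'}$, with multiplier $\overline{\chi'}$. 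I expect the \textbf{main obstacle} to be keeping the half-integer branch cuts and the interplay between $(c\tau+d)^{1/2}$ and $|c\tau+d|$ consistent: one must verify that the $(c\tau+d)^{1/2}$ arising from $|c\tau+d|^{-1}$ combines correctly with the $(c\tau+d)^{1/2}$ implicit in the theta transformation law so that all holomorphic square roots cancel against each other, leaving only the antiholomorphic factor to be conjugated. This is precisely the point where the conjugation in the definition of $G$ is forced.

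Having obtained that $G$ satisfies $G|_{k-1/2,\overline{\chi'},\overline{\rho'}}\gamma = G$ for all $\gamma \in \mathrm{SL}_2(\mathbb{Z})$, I would finish by checking the \emph{analytic conditions}: that each $\overline{g_\mu}$ is holomorphic and that its Fourier expansion has the required shape with exponents in $[0,1)$ determined by the diagonal entries of $\overline{\chi'}\,\overline{\rho'}(T)$. Holomorphy of $\overline{g_\mu}$ is exactly the content of the skew-holomorphic condition: the non-holomorphic factor $\exp(-\pi y(r^2-4m(n+\kappa))/m)e^{2\pi i(n+\kappa)\tau}$, after conjugation, becomes a pure exponential $e^{2\pi i(\,(r^2/4m) - (n+\kappa)\,+\,\text{shift})\overline{\tau}}$-type term whose conjugate is holomorphic in $\tau$; I would verify this explicitly by computing $\overline{g_\mu(\tau)}$ from the series read off in the first step and confirming that every $y$-dependence collapses into $e^{2\pi i \ast \tau}$ with a nonnegative frequency, which is what makes $G$ a holomorphic vector-valued modular form rather than merely real-analytic. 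Since the remaining details — the precise matrix entries of $\rho'$ and the value of $\kappa$ — are exactly parallel to the holomorphic case treated in Theorem \ref{thm 5} and the reference \cite{BR}, I would cite that correspondence and note that the only genuinely new input is the conjugation bookkeeping described above.
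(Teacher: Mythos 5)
The paper does not prove this statement: it is quoted directly from \cite[Section 6]{BR} (it is the skew-holomorphic analogue of Theorem \ref{thm 5}, which is likewise cited without proof), so there is no internal argument to compare against. Your outline is the standard theta-decomposition argument and is essentially the one in the cited reference: uniqueness from the disjoint $z$-Fourier supports of the $\theta_{m,\mu}$, the weight count $(c\overline{\tau}+d)^{1-k}\lvert c\tau+d\rvert^{-1}=(c\overline{\tau}+d)^{1/2-k}(c\tau+d)^{-1/2}$ against the weight-$\tfrac12$ theta automorphy factor, and conjugation to convert the residual antiholomorphic factor $(c\overline{\tau}+d)^{1/2-k}$ and the Weil-type matrix into $(c\tau+d)^{k-1/2}$ and $\overline{\rho'}$; your computation that $\overline{g_\mu}$ collapses to $\sum \overline{d(n+\kappa,r)}\,e^{2\pi i\frac{r^2-4m(n+\kappa)}{4m}\tau}$ with nonnegative exponents is the right holomorphy check. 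The one point you gloss over is the \emph{existence} of the decomposition: collecting terms with $r\equiv\mu\pmod{2m}$ only yields a well-defined $g_\mu$ because the invariance under the lattice part $(\lambda,\mu)\in\mathbb{Z}^2$ of $\Gamma^{J}$ forces $d(n+\kappa,r)$ to depend only on $r\bmod 2m$ and the discriminant $r^2-4m(n+\kappa)$; linear independence of the $\theta_{m,\mu}$ gives uniqueness but not this periodicity, so that step should be made explicit.
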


The theta expansion induces an isomorphism $\Psi^{sk} : J_{k,m,\chi}^{sk}\to M_{k-\frac{1}{2},\overline{\chi'},\overline{\rho'}}$ given by 
\[ \Psi^{sk}(g)(\tau):=\sum_{\mu=1}^{2m} \overline{g_{\mu}(\tau)}\mathbf{e_{\mu}}, \]
where $g(\tau,z)=\sum_{\mu=1}^{2m}g_{\mu}(\tau)\cdot \theta_{m,\mu}(\tau,z)$.
Following the argument in \cite[Theorem 5.3]{EZ1}, if $f,g\in J^{sk,cusp}_{k,m,\chi}$, then we have
\[ \langle f,g \rangle = \frac{1}{\sqrt{2m}}\overline{\langle \Psi^{sk}(f), \Psi^{sk}(g) \rangle}.\]

Analogous to the Jacobi form case, one also defines the extended Rankin–Cohen bracket of a skew-holomorphic Jacobi form with a modular form (for example, see \cite{K}).

\begin{dfn}\cite[Definition 1.1]{K}\label{def 4}
    Let $f$ be a function on $\mathbb{H}\times \mathbb{C}$ and $g$ be a function on $\mathbb{H}$. 
    Let $k$ and $\ell$ be real numbers, and $\nu$ be a positive integer. 
    The extended $\nu$-th Rankin-Cohen bracket $[f,g]^{\mathrm{skew}}_{k,l,\nu}$ is defined by 
    \[ [f,g]^{\mathrm{skew}}_{k,l,\nu}(\tau,z):=\sum_{r+s=\nu}(-1)^r\binom{\nu+k-3/2}{s}\binom{\nu+l-1}{r} D^r_{-\overline{\tau}}(f(\tau,z))D^s_{-\overline{\tau}}(\overline{g(\tau)}).  \]
\end{dfn}

The following proposition shows that the $n$-th extended Rankin-Cohen bracket of a skew-holomorphic Jacobi form with a modular form is a skew-holomorphic Jacobi cusp form. 

\begin{prop}\cite[Proposition 1.3]{K}\label{prop 4}
    Let $\nu$ be a positive integer.
    Let $k$ and $l$ be real numbers and $\chi_1$ (resp. $\chi_2$) be a multiplier system of weight $k$ (resp. $l$) on $\mathrm{SL}_{2}(\mathbb{Z})$. 
    Assume that $f\in J^{sk}_{k,m,\chi_1}$ and that
    $g$ is a modular form of weight $l$ and multiplier system $\chi_2$ on $\mathrm{SL}_{2}(\mathbb{Z})$.
    Then, we have 
    \[ [f,g]^{\mathrm{skew}}_{k,l,\nu}\in J^{sk,cusp}_{k+l+2\nu,m,\chi_1\overline{\chi_2}} \]
\end{prop}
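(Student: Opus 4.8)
The plan is to mirror the structure of the proof of Proposition \ref{prop 1}, splitting the argument into two parts: first verifying that $[f,g]^{\mathrm{skew}}_{k,l,\nu}$ satisfies the transformation law of a skew-holomorphic Jacobi form of weight $k+l+2\nu$, index $m$, and multiplier system $\chi_1\overline{\chi_2}$, and second showing that the resulting form is cuspidal. For the transformation law, I would appeal to the construction in \cite{K}: the bracket $[f,g]^{\mathrm{skew}}_{k,l,\nu}$ is built from the Cohen-type differential operator $D_{-\overline{\tau}}$ acting on $f$ and on $\overline{g}$, which is the natural operator in the skew-holomorphic setting since it respects the real-analytic (antiholomorphic in $\tau$) dependence of skew-holomorphic forms. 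As in the proof of Proposition \ref{prop 1}, Kimura's computation establishes the membership in $J^{sk}_{k+l+2\nu,m,\chi_1\overline{\chi_2}}$ in the case of integral weights and trivial multiplier systems, and the same computation goes through verbatim for real weights $k,l$ and arbitrary multiplier systems $\chi_1,\chi_2$; the appearance of $\overline{\chi_2}$ (rather than $\chi_2$) in the multiplier system is forced by the fact that the operator acts on $\overline{g(\tau)}$, so the transformation factor of $g$ enters conjugated.

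The substantive part, and the step I would carry out in detail, is the cuspidality. Here I would proceed exactly as in Proposition \ref{prop 1}: expand $f$ and $g$ into their Fourier series, compute the action of $D_{-\overline{\tau}}^r$ on the individual terms, and read off the Fourier coefficients of the bracket. Writing $g(\tau)=\sum_{n+\kappa_2\ge 0}a_g(n+\kappa_2)e^{2\pi i(n+\kappa_2)\tau}$ so that $\overline{g(\tau)}=\sum a_g(n+\kappa_2)\overline{e^{2\pi i(n+\kappa_2)\tau}}$, the operator $D_{-\overline{\tau}}=\tfrac{1}{2\pi i}\tfrac{d}{d(-\overline{\tau})}$ produces a factor of $(n+\kappa_2)^s$ on each term, and similarly the skew-holomorphic Fourier expansion of $f$ yields factors built from $r^2-4m(n+\kappa_1)$ acting on the $\tau$-dependence. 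The crucial point is that the Fourier coefficient $c(n+\kappa_1+\kappa_2,r)$ of the bracket is a finite sum, over decompositions $n=n_1+n_2$ with $r^2-4m(n_1+\kappa_1)\ge 0$ and $n_2+\kappa_2\ge 0$, of products of coefficients of $f$ and $g$ weighted by powers of $r^2-4m(n_1+\kappa_1)$ and of $n_2+\kappa_2$.

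I would then observe that on the cuspidal boundary $r^2-4m(n+\kappa_1+\kappa_2)=0$, every admissible decomposition is forced to satisfy $r^2-4m(n_1+\kappa_1)=0$ and $n_2+\kappa_2=0$ simultaneously, exactly as in the holomorphic case: from $r^2-4m(n_1+\kappa_1)\ge 0$ and $n_2+\kappa_2\ge 0$ together with $r^2-4m(n_1+\kappa_1)+(-4m)(n_2+\kappa_2)=r^2-4m(n+\kappa_1+\kappa_2)=0$, nonnegativity of both summands forces each to vanish. Since $f$ is merely a skew-holomorphic Jacobi form (not assumed cuspidal) its coefficient $c_f(n_1+\kappa_1,r)$ need not vanish there, but the presence of the factor $(n_2+\kappa_2)^{\nu-s}$, which is zero whenever $n_2+\kappa_2=0$ and $\nu-s>0$, together with the complementary factor involving $r^2-4m(n_1+\kappa_1)$ vanishing when $s>0$, kills the coefficient: for each $s$ one of the two factors is zero, so $c(n+\kappa_1+\kappa_2,r)=0$ on the boundary. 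The main obstacle is bookkeeping the sign conventions and the exact exponents coming from $D_{-\overline{\tau}}$ acting on the antiholomorphic exponentials in the skew-holomorphic expansion of $f$, since these differ from the holomorphic case by conjugations and signs; but the underlying nonnegativity argument for cuspidality is identical to that of Proposition \ref{prop 1} and presents no genuine difficulty once the coefficient formula is written down.
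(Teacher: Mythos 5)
Your overall strategy is exactly the paper's: cite Kimura's computation for the transformation law (extended to real weights and general multiplier systems), identify the multiplier as $\chi_1\overline{\chi_2}$ because the bracket acts on $\overline{g}$, and then run the cuspidality argument of Proposition \ref{prop 1} on the Fourier coefficients. The paper in fact omits the cuspidality details entirely, so the one step you carry out explicitly is the one that matters — and there you have a sign error that, as written, breaks the argument. You index the bracket's coefficients by $n+\kappa_1+\kappa_2$ with $n=n_1+n_2$ and write the boundary condition as $\bigl(r^2-4m(n_1+\kappa_1)\bigr)+(-4m)(n_2+\kappa_2)=0$, then invoke ``nonnegativity of both summands.'' But the second summand $-4m(n_2+\kappa_2)$ is nonpositive, not nonnegative, so from $A\ge 0$, $B\le 0$, $A+B=0$ you cannot conclude $A=B=0$. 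This also contradicts your own (correct) identification of the multiplier as $\chi_1\overline{\chi_2}$: the exponents must then combine to $\kappa_1-\kappa_2$, not $\kappa_1+\kappa_2$.

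The correct bookkeeping is as follows. Writing a Fourier term of $f$ as $d_f(n_1+\kappa_1,r)\,e^{2\pi i\frac{r^2}{4m}\tau}\,e^{2\pi i\frac{D_1}{4m}(-\overline{\tau})}\,e^{2\pi i rz}$ with $D_1:=r^2-4m(n_1+\kappa_1)\ge 0$, the operator $D_{-\overline{\tau}}$ produces the factor $\bigl(\tfrac{D_1}{4m}\bigr)^{j}$ on $f$ and $(n_2+\kappa_2)^{s}$ on $\overline{g(\tau)}=\sum\overline{a_g(n_2+\kappa_2)}\,e^{2\pi i(n_2+\kappa_2)(-\overline{\tau})}$ (note the coefficient is also conjugated). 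The product term has $-\overline{\tau}$-exponent $\tfrac{D_1}{4m}+(n_2+\kappa_2)$, so the bracket's term sits at discriminant $\widetilde{D}=D_1+4m(n_2+\kappa_2)$ and Fourier index $n_1-n_2+\kappa_1-\kappa_2$, consistent with $\chi_1\overline{\chi_2}$ and with the expansions appearing in Theorems \ref{thm 10} and \ref{thm 11}. Now $\widetilde{D}$ genuinely is a sum of two nonnegative quantities, so $\widetilde{D}=0$ forces $D_1=0$ and $n_2+\kappa_2=0$, and since $j+s=\nu\ge 1$ at least one of the factors $\bigl(\tfrac{D_1}{4m}\bigr)^{j}$, $(n_2+\kappa_2)^{s}$ vanishes in each summand. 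With this correction your argument goes through and matches what the paper intends.
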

\begin{proof}
    Similar to Proposition \ref{prop 1}, Kimura \cite{K} proved that 
    \[ [f,g]^{\mathrm{skew}}_{k,l,\nu}\in J^{sk}_{k_1+l+2\nu,m,\chi_1\overline{\chi_2}}, \]
    when $k$ and $l$ are positive integers and both $\chi_1$ and $\chi_2$ are trivial.
    Since the remaining part of the proof is analogous to that of Proposition \ref{prop 1} in the Jacobi form case, we omit the details.  
\end{proof}

Via the map $\Psi^{sk}$, the following theorem shows that the extended Rankin–Cohen bracket on skew-holomorphic Jacobi forms aligns, up to a constant multiple, with the  Rankin–Cohen bracket on vector-valued modular forms.

\begin{thm}\label{thm 8}
    Assume that $f\in J^{sk}_{k,m,\chi}$ and that $g$ is a modular form of weight $\ell$ and multiplier system $\chi_1$ on $\mathrm{SL}_{2}(\mathbb{Z})$. 
    For a positive integer $\nu$, we have 
    \[ \Psi^{sk}\left([f,g]^{\mathrm{skew}}_{k,l,\nu}\right) = \frac{1}{\nu !}\cdot[\Psi^{sk}(f),g]_{\nu}. \]
\end{thm}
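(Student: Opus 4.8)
The plan is to mirror the proof of Theorem \ref{thm 7} almost verbatim, replacing the holomorphic operators by their skew-holomorphic counterparts. The key structural fact I would exploit first is the behavior of the differential operator $D_{-\overline{\tau}}$ on the theta decomposition. Writing $f(\tau,z)=\sum_{\mu=1}^{2m} g_{\mu}(\tau)\cdot\theta_{m,\mu}(\tau,z)$ (here I use $g_\mu$ to match the notation of Theorem \ref{thm 6}), I would first observe that, since each $\theta_{m,\mu}$ is holomorphic and hence annihilated by $\partial/\partial\overline{\tau}$, the operator $D_{-\overline{\tau}}=-\frac{1}{2\pi i}\,\partial/\partial\overline{\tau}$ acts only on the coefficient functions. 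Applying $D_{-\overline{\tau}}$ and then conjugating produces $\overline{D_{-\overline{\tau}}^{r}(g_\mu)}=D_{\tau}^{r}(\overline{g_\mu})$, which is exactly the holomorphic derivative of the vector-entry $\overline{g_\mu}$ appearing in the definition of $\Psi^{sk}(f)$. This is the analogue of equation \eqref{eq 2} in the proof of Theorem \ref{thm 7}, and it is the conceptual heart of the argument.

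Second, I would substitute this into Definition \ref{def 4} and apply $\Psi^{sk}$. Expanding $[f,g]^{\mathrm{skew}}_{k,l,\nu}=\sum_{r+s=\nu}(-1)^r\binom{\nu+k-3/2}{s}\binom{\nu+l-1}{r}D^r_{-\overline{\tau}}(f)\,D^s_{-\overline{\tau}}(\overline{g})$ against the theta decomposition, collecting the $\theta_{m,\mu}$ terms, and then applying $\Psi^{sk}$ (which conjugates the coefficient functions), I expect to obtain
\begin{equation*}
    \Psi^{sk}\left([f,g]^{\mathrm{skew}}_{k,l,\nu}\right)=\sum_{\mu=1}^{2m}\left(\sum_{r+s=\nu}(-1)^r\binom{\nu+k-3/2}{s}\binom{\nu+l-1}{r}D^r_{\tau}(\overline{g_\mu})\,D^s_{\tau}(g)\right)\mathbf{e_\mu},
\end{equation*}
after checking that the conjugation $\overline{D^s_{-\overline{\tau}}(\overline{g})}=D^s_{\tau}(g)$ restores the Fourier expansion of the holomorphic form $g$. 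The sign $(-1)^r$ already present in Definition \ref{def 4} should match the sign $(-1)^s$ that appeared in Theorem \ref{thm 7}, after swapping the roles of $r$ and $s$ relative to which factor carries the theta function.

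Third, I would compute $[\Psi^{sk}(f),g]_{\nu}$ directly from Definition \ref{def 1}, using that $\Psi^{sk}(f)$ has weight $k-\frac{1}{2}$ and entries $\overline{g_\mu}$. This is the identical computation to the one at the end of the proof of Theorem \ref{thm 7}: the quotient of Gamma factors $\frac{\Gamma(k-\frac12+\nu)\Gamma(l+\nu)}{\Gamma(k-\frac12+i)\Gamma(l+\nu-i)}$ collapses into $\nu!\binom{\nu+k-3/2}{s}\binom{\nu+l-1}{r}$ with $r=i$, $s=\nu-i$, producing the factor of $\nu!$ in the statement. Matching the two expressions term by term then yields $\Psi^{sk}([f,g]^{\mathrm{skew}}_{k,l,\nu})=\frac{1}{\nu!}[\Psi^{sk}(f),g]_{\nu}$. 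I note that, unlike Theorem \ref{thm 7}, there is no factor $(4m)^{\nu}$ here, because the skew bracket uses the derivative $D_{-\overline{\tau}}$ rather than the operator $L_m$, so no eigenvalue $4m$ is introduced.

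\textbf{The main obstacle} I anticipate is bookkeeping the complex conjugations correctly and consistently. The operator $D_{-\overline{\tau}}$, the conjugation of $g$ inside Definition \ref{def 4}, and the conjugation built into $\Psi^{sk}$ all interact, and it is easy to drop or double a conjugate. The delicate point is verifying that $\Psi^{sk}$ applied to the bracket conjugates the coefficient functions in exactly the way that turns $D^r_{-\overline{\tau}}(g_\mu)\,D^s_{-\overline{\tau}}(\overline{g})$ into $D^r_{\tau}(\overline{g_\mu})\,D^s_{\tau}(g)$, so that the anti-holomorphic derivatives on the skew side become the holomorphic derivatives required on the vector-valued side. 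Once this single identity is pinned down, the remaining combinatorial identity between the binomial/Gamma coefficients is routine and identical to the one already verified in Theorem \ref{thm 7}.
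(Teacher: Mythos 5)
Your proposal follows the paper's own (two-line) proof exactly: the paper likewise observes that $D_{-\overline{\tau}}$ annihilates each $\theta_{m,\mu}$, so that $D^{r}_{-\overline{\tau}}$ acts only on the coefficient functions of the theta decomposition, and then defers the rest to the computation already done for Theorem \ref{thm 7}. Your key identity $\overline{D^{r}_{-\overline{\tau}}(h)}=D^{r}_{\tau}(\overline{h})$ is correct, and the collapse of the Gamma quotient into $\nu!\binom{\nu+k-3/2}{s}\binom{\nu+l-1}{r}$ is the same bookkeeping as in the paper.

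The one step that does not go through as you state it is the sign matching, which you yourself flag as the delicate point. You claim the factor $(-1)^{r}$ in Definition \ref{def 4} "should match the sign $(-1)^{s}$ that appeared in Theorem \ref{thm 7} after swapping the roles of $r$ and $s$ relative to which factor carries the theta function" --- but no such swap occurs: in both Definition \ref{def 3} and Definition \ref{def 4} the index $r$ counts derivatives applied to the Jacobi form (the factor carrying the theta functions) and $s$ counts derivatives applied to $g$. Consequently, with $r+s=\nu$, one has $(-1)^{r}=(-1)^{\nu}(-1)^{s}$, and the two expressions you intend to identify,
\begin{equation*}
\sum_{\mu=1}^{2m}\Bigl(\sum_{r+s=\nu}(-1)^{r}\tbinom{\nu+k-3/2}{s}\tbinom{\nu+l-1}{r}D^{r}_{\tau}(\overline{f_{\mu}})D^{s}_{\tau}(g)\Bigr)\mathbf{e_{\mu}}
\quad\text{and}\quad
\frac{1}{\nu!}[\Psi^{sk}(f),g]_{\nu},
\end{equation*}
differ term by term by $(-1)^{\nu}$. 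So either the $(-1)^{r}$ in Definition \ref{def 4} is meant to be $(-1)^{s}$ (the direct analogue of the $(-4m)^{s}$ in the holomorphic bracket), or the identity of Theorem \ref{thm 8} acquires an extra factor $(-1)^{\nu}$; this discrepancy is arguably latent in the paper itself, whose proof never carries out the sign comparison, but your justification for dismissing it is not valid and needs to be replaced by an honest reconciliation of the two sign conventions.
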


\begin{proof}
    Since $\theta_{m,\mu}(\tau,z)$ is a holomorphic function in $\tau\in \mathbb{H}$, it follows that for any $\mu\in \{1,\dots,2m\}$, 
    \[ D_{-\overline{\tau}}(\theta_{m,\mu}(\tau,z))=0.\]
    Thus, for any non-negative integer $r$, we have 
    \[ D^{r}_{-\overline{\tau}}(f)=\sum_{\mu=1}^{2m} D^{r}_{-\overline{{\tau}}}(f_{\mu}(\tau))\cdot \theta_{m,\mu}(\tau,z). \]
    As in the proof of Theorem \ref{thm 7}, one can complete the rest of the proof of Theorem \ref{thm 8} by using the definitions of the extended Rankin-Cohen bracket $[f,g]_{k,l,\nu}^{\mathrm{skew}}$ and $\Psi^{sk}$. 
\end{proof}

As in the Jacobi form case, the compatibility provided by Theorem \ref{thm 8} allows us to reformulate Theorems \ref{thm 3} and \ref{thm 4} as results about skew‐holomorphic Jacobi forms. 
Thus, we state the corresponding analogue in the following theorems without proof, as the arguments are analogous to those used in the proofs of Theorems \ref{thm 9} and \ref{prop 2}.

\begin{thm}\label{thm 10}
    For each $i\in \{1,2\}$, let $k_i$ be a real number with $k_i>2$ and $\chi_i$ be a multiplier system of weight $k_i$ on $\mathrm{SL}_{2}(\mathbb{Z})$. 
    Let $m$ and $\nu$ be positive integers. 
    Let $\kappa_i\in [0,1)$ be a real number such that $\chi_{i}(T)=e^{2\pi i \kappa_i}$.
    Assume that $f\in J^{sk}_{k_1+k_2+2\nu,m,\chi_1\overline{\chi_2}}$ has a Fourier expansion of the form
    \[ f(\tau,z)=\sum_{\substack{n,r\in \mathbb{Z}\\r^2-4m(n+\kappa_1-\kappa_2)\geq 0}} d_{f}(n+\kappa_1-\kappa_2,r)\exp\left(-\frac{\pi y}{m}(r^2-4m(n+\kappa_1-\kappa_2))\right)e^{2\pi i (n+\kappa_1-\kappa_2)\tau}e^{2\pi i r z} \]
    and that $g\in J^{sk}_{k_1,m,\chi_1}$ has a Fourier expansion of the form
    \[ g(\tau,z)=\sum_{\substack{n,r\in \mathbb{Z}\\r^2-4m(n+\kappa_1)\geq 0}} d_{g}(n+\kappa_1,r)\exp\left(-\frac{\pi y}{m}(r^2-4m(n+\kappa_1))\right) e^{2\pi i (n+\kappa_1)\tau}e^{2\pi i r z}. \]
    Then, we have 
    \begin{equation*}
        \begin{aligned}
            &\langle f, [g,\mathbb{P}(\tau;k_2,\chi_2,s)]_{k_1,k_2,\nu}^{\mathrm{skew}} \rangle\\
            &=\frac{\Gamma(k_1+k_2+2\nu-\frac{3}{2})}{\sqrt{2m}\cdot \nu!\cdot (4\pi)^{k_1+k_2+2\nu-1}}\sum_{j=1}^{2m}\sum_{u=0}^{\nu} (-(s+\kappa_{2}))^{u}\binom{\nu}{u}\\
            &\times\frac{\Gamma(k_1+\nu-\frac{1}{2})\Gamma(k_2+\nu)}{\Gamma(k_1+\nu-u-\frac{1}{2})\Gamma(k_2+u)}\sum_{n\in \mathbb{Z}} \left(-n-\kappa_{1}+\frac{j^2}{4m}\right)^{\nu-u}\frac{d_{f}(-s+n+\kappa_{1}-\kappa_{2},j)\overline{d_g(n+\kappa_{1},j)}}{\left(s-n-\kappa_{1}+\kappa_{2}+\frac{j^2}{4m}\right)^{k_1+k_2+2\nu-1}}.
        \end{aligned}
    \end{equation*}
\end{thm}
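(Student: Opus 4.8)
The plan is to reduce Theorem~\ref{thm 10} to the vector-valued computation of Theorem~\ref{thm 3} via the isomorphism $\Psi^{sk}$, exactly as Theorem~\ref{thm 9} was derived from Theorem~\ref{thm 3} using $\Psi$. First I would invoke the compatibility of the Petersson inner products for skew-holomorphic Jacobi forms: since $f,g$ and the bracket $[g,\mathbb{P}]^{\mathrm{skew}}_{k_1,k_2,\nu}$ all lie in skew-holomorphic Jacobi spaces, the relation
\[
\langle f, [g,\mathbb{P}(\tau;k_2,\chi_2,s)]^{\mathrm{skew}}_{k_1,k_2,\nu} \rangle
= \frac{1}{\sqrt{2m}}\,\overline{\big\langle \Psi^{sk}(f),\, \Psi^{sk}\big([g,\mathbb{P}]^{\mathrm{skew}}_{k_1,k_2,\nu}\big)\big\rangle}
\]
transfers the pairing to the vector-valued side. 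Then I would apply Theorem~\ref{thm 8} to rewrite $\Psi^{sk}\big([g,\mathbb{P}]^{\mathrm{skew}}_{k_1,k_2,\nu}\big) = \tfrac{1}{\nu!}[\Psi^{sk}(g),\mathbb{P}]_{\nu}$, noting that unlike the holomorphic case in Theorem~\ref{thm 9} there is no extra $(4m)^\nu$ factor (compare Theorem~\ref{thm 7} with Theorem~\ref{thm 8}), which accounts for the absence of $(4m)^\nu$ in the final formula.

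Next I would compute the theta expansions of $\Psi^{sk}(f)$ and $\Psi^{sk}(g)$ explicitly. By Theorem~\ref{thm 6}, $\Psi^{sk}$ sends the skew-holomorphic components $g_\mu(\tau)$ to their complex conjugates $\overline{g_\mu(\tau)}$, so the $\mu$-th component of $\Psi^{sk}(g)$ is the conjugate of a function built from $d_g(n+\kappa_1,\mu)$ and the exponential factor $\exp\!\big(-\tfrac{\pi y}{m}(r^2-4m(n+\kappa_1))\big)e^{2\pi i(n+\kappa_1)\tau}$. Taking the complex conjugate turns this into a genuine holomorphic $q$-series whose exponent becomes $-(n+\kappa_1)+\tfrac{\mu^2}{4m}$ in the index; this sign reversal is the source of the terms $-n-\kappa_1+\tfrac{j^2}{4m}$ and $s-n-\kappa_1+\kappa_2+\tfrac{j^2}{4m}$ in the statement, and of the indices $-s+n+\kappa_1-\kappa_2$ and $d_f(-s+n+\kappa_1-\kappa_2,j)$ in the Fourier coefficients. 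I would record that $\Psi^{sk}(g)\in M_{k_1-\frac12,\overline{\chi_1\chi'},\overline{\rho'}}$ and $\Psi^{sk}(f)\in M_{k_1+k_2-\frac12,\overline{\chi_1\overline{\chi_2}\chi'},\overline{\rho'}}$, with the $(j,j)$-entry of the relevant $\chi'(T)\rho'(T)$ contributing $\tfrac{j^2}{4m}$, and then feed these expansions into Theorem~\ref{thm 3} with $k_1\mapsto k_1-\tfrac12$ and the appropriate representations.

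The main obstacle, and the step requiring the most care, will be the bookkeeping of complex conjugation introduced by $\Psi^{sk}$. Because the inner-product compatibility carries an outer conjugation bar and $\Psi^{sk}$ itself conjugates the components, I must track precisely which quantities are conjugated so that the final real-analytic answer comes out with the correct coefficients $d_f(-s+n+\kappa_1-\kappa_2,j)$ and $\overline{d_g(n+\kappa_1,j)}$ rather than their conjugates, and with the sign-reversed exponents in the denominators. In particular the shift from $n+\kappa_1-\frac{j^2}{4m}$ (holomorphic case) to $-n-\kappa_1+\frac{j^2}{4m}$ (skew case) must be matched against the $(n+\kappa_{1,j})^{\nu-u}$ and $(s+n+\kappa_{1,j}+\kappa_{2,r})^{-(k_1+k_2+2\nu-1)}$ factors in Theorem~\ref{thm 3}. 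Once the conjugation is correctly propagated, substitution into the formula of Theorem~\ref{thm 3} and simplification of the $\Gamma$-quotients (the half-integer shifts $k_1+\nu-\frac12$ arising from the weight $k_1-\frac12$) yield the stated expression, so that no independent analytic estimates are needed beyond those already established in Sections~\ref{s : Pre} and~\ref{s : Thm}.
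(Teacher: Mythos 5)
Your proposal is correct and follows exactly the route the paper intends: the paper states Theorem~\ref{thm 10} without proof, explicitly deferring to the argument of Theorem~\ref{thm 9}, and your plan (transfer via $\Psi^{sk}$ using the conjugated Petersson compatibility, apply Theorem~\ref{thm 8} in place of Theorem~\ref{thm 7} to explain the missing $(4m)^{\nu}$, track the sign-reversed exponents and the conjugation so that $d_f\overline{d_g}$ emerges, then invoke Theorem~\ref{thm 3}) is precisely that analogous argument spelled out. No gaps.
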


\begin{thm}\label{thm 11}
    For each $i\in \{1,2\}$, let $k_i$, $\chi_i$ and $\kappa_i$ be as in Theorem \ref{thm 10}.
    Let $m$ and $\nu$ be positive integers.
    Assume that $g(\tau)=\sum_{n\in \mathbb{Z}} b(n+\kappa_1)e^{2\pi i (n+\kappa_1)\tau}$ is a modular form of weight $k_1$ and multiplier system $\chi_1$.
    Let $T^{\mathrm{skew}}_{g,\nu} : J^{sk,cusp}_{k_2,m,\chi_2}\to J^{sk,cusp}_{k_1+k_2+2\nu,m,\overline{\chi_1}\chi_2}$ be defined by $T^{\mathrm{skew}}_{g,\nu}(f):=[f,g]^{\mathrm{skew}}_{k_2,k_1,\nu}$. 
    If $h\in J^{sk,cusp}_{k_1+k_2+2\nu,m,\overline{\chi_1}\chi_2}$ has a Fourier expansion of the form
    \begin{equation*}
        \begin{aligned}
            &h(\tau,z)\\
            &=\sum_{\substack{n,r\in \mathbb{Z}\\r^2-4m(n-\kappa_1+\kappa_2)\geq 0}} d_{h}(n-\kappa_1+\kappa_2,r)\exp\left(\frac{-\pi y}{m}\left(r^2-4m(n-\kappa_1+\kappa_2)\right)\right) e^{2\pi i (n-\kappa_1+\kappa_2)\tau}e^{2\pi i r z},
        \end{aligned}
    \end{equation*}
    then, 
    \[ \left(T^{\mathrm{skew},*}_{g,\nu}(h)\right)(\tau,z)=\sum_{\substack{n,r\in \mathbb{Z}\\r^2-4m(n+\kappa_2)\geq 0}} d(n+\kappa_2,r)\exp\left(\frac{-\pi y}{m}\left(r^2-4m(n+\kappa_2)\right)\right) e^{2\pi i (n+\kappa_2)\tau}e^{2\pi i r z}, \]
    where $\mu\in\{1,\dots,2m\}$ with $\mu\equiv r\pmod{2m}$ and
    \begin{equation*}
        \begin{aligned}
            &d(n+\kappa_2,r)\\
            &=\frac{(-1)^{\nu}}{\nu!}(4\pi(-n-\kappa_{2}+\frac{\mu^2}{4m}))^{k_2-\frac{3}{2}}\frac{\Gamma(k_1+k_2+2\nu-\frac{3}{2})}{(4\pi)^{k_1+k_2+2\nu-\frac{3}{2}}}\sum_{u=0}^{\nu} (n+\kappa_{2}-\frac{\mu^2}{4m})^{u}\binom{\nu}{u}\\
            &\times\frac{\Gamma(k_1+\nu)\Gamma(k_2+\nu-\frac{1}{2})}{\Gamma(k_1+\nu-u)\Gamma(k_2+u-\frac{1}{2})\Gamma(k_2-\frac{3}{2})}\sum_{t\in \mathbb{Z}} (t+\kappa_{1})^{\nu-u}\frac{d_h(n-t-\kappa_{1}+\kappa_{2},\mu)b(t+\kappa_{1})}{(-n+t+\kappa_{1}-\kappa_{2}+\frac{\mu^2}{4m})^{k_1+k_2+2\nu-\frac{3}{2}}}.
        \end{aligned}
    \end{equation*}
Here, $T^{\mathrm{skew},*}_{g,\nu}$ denotes the adjoint map of $T^{\mathrm{skew}}_{g,\nu}$.
    
\end{thm}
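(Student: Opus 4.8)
The plan is to deduce Theorem \ref{thm 11} from Theorem \ref{thm 4} through the isomorphism $\Psi^{sk}$, mirroring the proof of Theorem \ref{prop 2} but carrying the extra complex conjugations that distinguish the skew-holomorphic setting. These conjugations enter in two places: the inner-product comparison $\langle f,g\rangle = \frac{1}{\sqrt{2m}}\,\overline{\langle \Psi^{sk}(f),\Psi^{sk}(g)\rangle}$ and the definition $\Psi^{sk}(g)=\sum_{\mu}\overline{g_\mu}\,\mathbf{e_\mu}$. The first step is to establish the operator identity $\Psi^{sk}\bigl(T^{\mathrm{skew},*}_{g,\nu}(h)\bigr) = \frac{1}{\nu!}\,T^{*}_{g,\nu}\bigl(\Psi^{sk}(h)\bigr)$. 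For an arbitrary $f\in J^{sk,cusp}_{k_2,m,\chi_2}$ I would expand $\langle T^{\mathrm{skew}}_{g,\nu}(f),h\rangle$ by passing to the vector-valued side via the conjugate inner-product formula, then use Theorem \ref{thm 8} to rewrite $\Psi^{sk}\bigl([f,g]^{\mathrm{skew}}_{k_2,k_1,\nu}\bigr)$ as $\frac{1}{\nu!}T_{g,\nu}\bigl(\Psi^{sk}(f)\bigr)$, then apply the adjoint relation for $T_{g,\nu}$ from Section \ref{s : Thm}, and finally return to the Jacobi side. Because $\Psi^{sk}$ is an isomorphism, $\Psi^{sk}(f)$ runs over all cusp forms, so matching both sides yields the operator identity. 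Note that, in contrast to Theorem \ref{prop 2}, the factor $(4m)^{\nu}$ is absent here, since Theorem \ref{thm 8} carries no such factor, unlike Theorem \ref{thm 7}.

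The second step is to make the right-hand side explicit. I would record the theta expansion of $\Psi^{sk}(h)$, a vector-valued modular form of weight $k_1+k_2+2\nu-\frac{1}{2}$ in the space prescribed by Theorem \ref{thm 6}: starting from the Fourier expansion of $h$ and applying the conjugation built into $\Psi^{sk}$, the $\mu$-th component becomes holomorphic, with its Fourier coefficient replaced by $\overline{d_h}$ and its exponent sign-reversed to the shape $\frac{\mu^2}{4m}-n-\kappa_1+\kappa_2$; the relevant $(\mu,\mu)$-entry of the twisting matrix is $e^{2\pi i \mu^2/4m}$. Here $g$ remains the scalar modular form, so Theorem \ref{thm 4} applies with $d_1=1$ and $d_2=2m$, the index $l$ running over the theta components $\mu$. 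Feeding the expansion of $\Psi^{sk}(h)$ into the coefficient formula of Theorem \ref{thm 4}, with $a_{j,l}$ identified with the Fourier data of $\Psi^{sk}(h)$ and $b_j$ with that of $g$, I would obtain the coefficients of $T^{*}_{g,\nu}\bigl(\Psi^{sk}(h)\bigr)$. Applying $(\Psi^{sk})^{-1}$, which conjugates once more, and collecting the $\mu$-th component with $\mu\equiv r\pmod{2m}$ then produces the claimed coefficients $d(n+\kappa_2,r)$.

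The main obstacle will be the bookkeeping of these nested conjugations and the sign reversals they force in the exponents. The conjugate in the inner-product identity, the conjugate in $\Psi^{sk}$, and the conjugation of $\overline{d_h}$ and of $b$ must cancel so that \emph{no} conjugate survives on $b(t+\kappa_1)$ in the final formula, in contrast to the $\overline{b(t+\kappa_1)}$ appearing in Theorem \ref{prop 2} and the $\overline{b_j}$ of Theorem \ref{thm 4}. The same sign reversals account for the arguments $d_h(n-t-\kappa_1+\kappa_2,\mu)$, the denominator $\bigl(-n+t+\kappa_1-\kappa_2+\tfrac{\mu^2}{4m}\bigr)^{k_1+k_2+2\nu-\frac{3}{2}}$, the power $(n+\kappa_2-\tfrac{\mu^2}{4m})^{u}$, and the prefactor $(-1)^{\nu}/\nu!$, whereas in the holomorphic case these appear conjugate-free and with a $(-4m)^{\nu}$ factor. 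A secondary point is confirming that the reindexing $n\mapsto -n$ induced by the conjugation correctly matches the summation ranges $r^2-4m(n+\kappa)\ge 0$ with the domain of summation in Theorem \ref{thm 4}. Once the exponents and ranges are aligned, the remaining algebra is identical to the Jacobi-form case and introduces no new difficulty.
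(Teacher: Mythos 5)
Your proposal is correct and follows exactly the route the paper intends: the paper in fact omits the proof of Theorem \ref{thm 11}, stating only that it is analogous to Theorems \ref{thm 9} and \ref{prop 2}, and your reduction via $\Psi^{sk}$, Theorem \ref{thm 8}, the conjugated inner-product identity, and the adjoint formula of Theorem \ref{thm 4} is precisely that argument, including the correct observation that no $(4m)^{\nu}$ appears and that the double conjugation leaves $b(t+\kappa_1)$ and $d_h$ unconjugated. (One small slip: the sign-reversed exponent of the $\mu$-th component of $\Psi^{sk}(h)$ should read $\frac{\mu^2}{4m}-n+\kappa_1-\kappa_2$ rather than $\frac{\mu^2}{4m}-n-\kappa_1+\kappa_2$; this does not affect the structure of the argument.)
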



\end{document}